\documentclass[12pt]{amsart}
\usepackage{amsmath,amsthm,amsfonts,amssymb,bm,bbm,graphicx,comment}
\usepackage[margin=1in]{geometry}    
\usepackage{natbib}
\usepackage{color}
\usepackage{parskip}  
\usepackage{hyperref}
\usepackage{cleveref}
\usepackage[utf8]{inputenc}
\usepackage{todonotes}

\newtheorem{definition}{Definition}[section]
\newtheorem{theorem}{Theorem}[section]

\newtheorem{prop}{Proposition}[section]
\newtheorem{remark}{Remark}[section]
\newtheorem{assumption}{Assumption}[section]
\numberwithin{equation}{section}

\newcommand{\R}{\mathbb{R}}

\newcommand{\N}{\mathbb{N}}

\newcommand{\E}{\mathbb{E}}

\title[Edge-indexed network time series]{Edge-indexed network time series with graph Ornstein-Uhlenbeck dynamics}
\author{Jiaming Chen}
\address{Department of Mathematics, Imperial College London, UK}
\email{j.chen1@imperial.ac.uk}
\author{Almut E. D. Veraart}
\address{Department of Mathematics, Imperial College London, UK}
\email{a.veraart@imperial.ac.uk}
\begin{document}

\maketitle

\begin{abstract}
We introduce a class of L\'evy-driven graph Ornstein-Uhlenbeck (grOU) models for edge-indexed network time series. The proposed framework extends generalized network autoregressive (GNAR) processes for edge-indexed network time series to continuous time and adapts graph Ornstein-Uhlenbeck dynamics, originally developed for node-indexed processes, to the edge-indexed setting. The model accommodates general L\'evy noise and therefore captures both Brownian and jump behavior. We show that the model parameters can be estimated via a maximum-likelihood framework and derive the asymptotic properties of the estimator. We examine the finite-sample performance of the methodology through simulation studies and illustrate its practical relevance in an empirical application to high-frequency financial data. The results indicate that grOU models for edge-indexed network time series improve forecasting accuracy and reduce computational time relative to standard benchmarks while maintaining robustness through their network-based parametrization.
\end{abstract}

{\it Keywords: L\'evy process, Ornstein-Uhlenbeck process, graph, edge-indexed network time series.
}\\
{\it MSC codes: 62M10, 60G10, 60G51, 62F12, 60E07.}

\section{Introduction}
Multivariate time series arise in many applications, including finance, economics, medical science and environmental science. In such settings, the dimension of the time series is typically large, and this requires models that are both flexible and parsimonious. Classical approaches often struggle to meet these competing demands. For instance, in vector autoregressive processes, the number of parameters grows quadratically with the dimension. This quickly becomes prohibitive in high-dimensional settings. 

To address these challenges, some recent research has focused on network-based models for multivariate time series, or more generally, network stochastic processes. In this paradigm, the dependence structure is encoded in the topology of a graph, allowing models to exploit sparsity in the system. The network autoregressive (NAR) model introduced by \cite{10.1214/16-AOS1476} and its generalization, the generalized network autoregressive (GNAR) model of \cite{knight2016modelling} and \cite{knight2020generalized}, illustrate how information flows across network nodes and their neighborhoods can be incorporated. Variants of these models, particularly the edge-based formulations introduced in \cite{mantziou2023gnar}, have since been proposed, extending the methodology from a new perspective.

Building on these discrete-time formulations, there is a growing interest in continuous-time analogs that naturally handle irregular sampling intervals and jump dynamics in high-frequency data. In particular, L\'evy-driven Ornstein-Uhlenbeck (OU) processes (\cite{barndorff2001non}) and their network extensions, such as graph OU processes (\cite{courgeau2022high}, \cite{courgeau2022likelihood}) and  multivariate continuous autoregressive (MCAR) processes (\cite{lucchese2026estimation}) provide a natural continuous-time framework with tractable dependence structures. It is therefore natural to extend GNAR-type models, and in particular the GNAR edge model, to their continuous-time counterparts, leading to the grOU framework for edge-indexed time series. Such models inherit the flexibility of OU dynamics while maintaining a network edge-based structure; accordingly, we aim to demonstrate their strong performance in explaining and forecasting correlated and high-frequency multivariate data. Although our simulation and empirical analyzes use a uniform time grid for simplicity and comparability with discrete-time benchmark models, the model itself is formulated in continuous time; inference depends only on the observed time increments and therefore remains valid under irregular sampling schemes.

Specifically, our work is motivated by a data set consisting of high-frequency time series of limit order books for a collection of U.S. equities and exchange-traded assets obtained from the LOBSTER database during the period 2023–2026. The sample spans multiple economic sectors and includes the SPDR S\&P 500 ETF (SPY) together with NASDAQ, Inc. (NDAQ). These assets naturally form a moderately sized financial network in which potential connections arise from sectoral and index membership.

We first evaluate the finite-sample properties of the proposed estimation procedure through a simulation study. The methodology is then applied to the high-frequency limit order book data to estimate and forecast the dynamics of edge processes under alternative network specifications, with predictive performance assessed against standard benchmarks. To facilitate reproducibility, the paper is accompanied by an open-source software release available at \href{https://github.com/jiamingchen13/GraphOUEdge}{https://github.com/jiamingchen13/GraphOUEdge}, where all code required to replicate the simulation and empirical analyzes is provided.

The rest of the article is organized as follows. Section \ref{Preliminaries and Model Setup} reviews multivariate continuous-time autoregressive processes and introduces the grOU process for edge-indexed time series. We also present concrete parametric specifications based on lag structures and neighbor-stage interactions. Section \ref{Parameter Estimation} develops the estimation framework for grOU edge-indexed processes. More precisely, we construct estimators for the drift parameters using discrete approximations to the stochastic integrals and time derivatives arising in the continuous-time formulation. Section \ref{Simulation study} examines the finite-sample performance of the estimators through a Monte Carlo simulation study. Section \ref{Empirical illustration} provides an empirical application using high-frequency limit order book data for a network of U.S. equities and exchange-traded assets, where the forecasting performance of the grOU edge model is compared with several benchmark approaches. Section \ref{Conclusion and Outlook} concludes with a discussion of the main findings and directions for future research, while some proofs of the principal theoretical results are collected in the Appendix.

\section{Graph Ornstein-Uhlenbeck models for edge-indexed network time series}
\label{Preliminaries and Model Setup}
\subsection{Notations}
We work on a filtered probability space $(\Omega,\mathcal{F},(\mathcal{F}_t,t\geq 0),\mathbb{P})$, to which all stochastic processes are assumed to be adapted. For $n,d\in\N$, let $\mathcal{M}_{n,d}(\mathbb{R})$ denote the space of $n\times d$ real matrices; when $n=d$, we drop the redundant subscript and write $\mathcal{M}_{d}(\mathbb{R})$. The $d\times d$ identity matrix and zero matrix are denoted by $\mathbf{I}_{d}$ and $\mathbf{0}_{d}$, respectively. For matrices $A,B$, the notation $A\odot B$ refers to their Hadamard product. For a set $S$, $\mathbbm{1}_{S}$ stands for its indicator function. The $(i,j)$-th entry of a matrix $A\in\mathcal{M}_{n,d}(\mathbb{R})$ is written as $A(i,j)$, and its transpose as $A^{\top}$. The vectors of ones and zeros, with a dimension clear from the context, are denoted by $\mathbf{1}$ and $\mathbf{0}$. For scalars $a_1,...,a_n$, the diagonal matrix with diagonal entries $(a_1,...,a_n)$ is denoted by $\text{diag}(a_1,...,a_n)$. The $n$-th component of a vector $\mathbf{a}$ is written as $\mathbf{a}^{(n)}$. We denote by $\mathbb{S}_d^{+}$ and $\mathbb{S}_d^{++}$ the cones of $d\times d$ symmetric, positive semi-definite and positive definite matrices, respectively. The norm of vectors or matrices is denoted by $\Vert\cdot\Vert$. Since all norms on finite-dimensional spaces are equivalent, the particular choice is immaterial. The spectrum of a matrix is denoted by $\sigma(\cdot)$ with further defining
\begin{equation*}
    M_d^-:=\{A\in\mathcal{M}_{d}(\mathbb{R})\vert\sigma(A)\subset(-\infty,0)+i\R\},
\end{equation*}
the set of $d\times d$ matrices whose eigenvalues have strictly negative real parts. Such matrices are also referred to as Hurwitz matrices.

For the convergence of random variables, $\overset{\mathbb{P}}{\longrightarrow}$ denotes convergence in probability, while $\overset{D}{\longrightarrow}$ denotes convergence in distribution.

\subsection{Networks and edge neighbors} 
In the following, we are interested in the setup where we observe time series on the edges of a network or graph. Let $\mathcal{G}=(V,E)$ be a static and known graph with node set $V=\{1,...,n\}$ and edge set $E\subseteq V \times V$. For a directed graph, $(i,j)\in E$ denotes an edge from $i$ to $j$. In the undirected case, edges are symmetric, i.e., $(i,j)\in E\Leftrightarrow (j,i)\in E$.

Following the definition of $r$-stage neighbors in \cite{knight2020generalized} and the edge-based notation of \cite{mantziou2023gnar}, we define the notion of $r$-stage neighboring edges for a given edge $(i,j)\in E$ and $r\in\mathbb{N}_0:=\N\cup\{0\}$. By convention, the $0$-stage neighbor of an edge is the edge itself, i.e., $\mathcal{N}^{(0)}((i,j)) :=\{(i,j)\}$. For $r=1$, the set of neighboring edges consists of all edges incident to at least one of the nodes $i$ or $j$, excluding $(i,j)$ itself:
\begin{equation*}
\mathcal{N}^{(1)}((i,j)) = \{(k,l)\in\delta^+(\{i,j\})\cup\delta^-(\{i,j\})\vert(k,l)\neq(i,j)\},
\end{equation*}
where $\delta^+(\cdot)$ and $\delta^-(\cdot)$ denote the sets of outgoing and incoming edges of a set of nodes, respectively. For further stage $r\geq 2$, higher-order neighbors are defined recursively by iterating through the first-stage neighborhood and excluding all edges that have already been assigned to previous stages:
\begin{equation*}
\mathcal{N}^{(r)}((i,j)) = \mathcal{N}^{(1)}(\mathcal{N}^{(r-1)}((i,j)))\setminus[\cup_{q=0}^{r-1}\mathcal{N}^{(q)}((i,j))].
\end{equation*}
We illustrate this construction in Figure \ref{fig:edge-neighbors3} for the case of a directed network with 8 nodes.
\begin{figure}[htbp]
    \centering
    \includegraphics[width=0.7\textwidth, height=0.23\textheight]{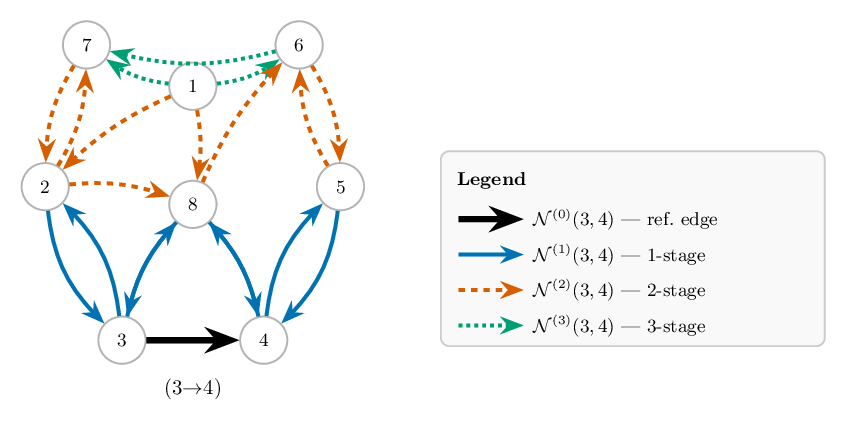}
    \caption{Illustration of the concept of neighbors of edges in a directed network with 8 nodes: Edge $(3,4)$ is the reference edge, and we show its 1st-stage (solid, blue), 2nd-stage (dashed, orange) and 3rd-stage (dotted, green) neighbors.}
    \label{fig:edge-neighbors3}
\end{figure}

Let $K:=\vert E\vert$ denote the total number of edges in the network and label them by $e_1,\dots,e_K$. Let $R_l$ the maximum neighborhood stage at lag $l$. For $r=1\dots,R_l$, define the weighted neighborhood matrices 
\begin{equation}\label{weight_matrix}
    \mathbf{W}^{(r)} := \begin{bmatrix}
    w_{e_1e_1}\mathbbm{1}\{e_1\in\mathcal{N}^{(r)}(e_1)\} & \dots  & w_{e_1e_K}\mathbbm{1}\{e_K\in\mathcal{N}^{(r)}(e_1)\} \\
    \vdots & \ddots & \vdots  \\
    w_{e_Ke_1}\mathbbm{1}\{e_1\in\mathcal{N}^{(r)}(e_K)\} & \dots  & w_{e_Ke_K}\mathbbm{1}\{e_K\in\mathcal{N}^{(r)}(e_K)
\end{bmatrix} \in \mathbb{R}^{K \times K}.
\end{equation}
The weights $w_{e_ie_j}\in[0,1]$ quantify the strength of the interaction between edges $e_i,e_j$. They may depend on the cardinality of the neighborhood, structural properties of the graph, or empirical measures of association between edge processes. We assume the normalization condition $\sum_{e_j\in\mathcal{N}^{(r)}(e_i)}w_{e_ie_j}=1$ for all $e_i\in E$ and $r\in\N$ such that $\mathcal{N}^{(r)}(e_i)$ is non-empty. 

\begin{remark}
It is straightforward to extend the above framework to allow for multiple covariate-dependent weight matrices. We exclude such extensions here and focus on a single weighting scheme to simplify the exposition.
\end{remark}

The GNAR edge model introduced in \cite{mantziou2023gnar} assumes that the future evolution of the process observed on a given edge depends not only on its own past, but also on lagged values of processes observed on neighboring edges, yielding the following vector form
\begin{equation*}
   \mathbf{X}_t = \sum_{l=1}^L\left(\text{diag}(\alpha_{e_1,l},...,\alpha_{e_K,l})\mathbf{X}_{t-l} + \sum_{r=1}^{R_l}\beta_{l,r}\mathbf{W}^{(r)}\mathbf{X}_{t-l}\right)+\mathrm{noise}_t,
\end{equation*}
where $\alpha_{e_i,l}$ is the ``standard" autoregressive coefficient associated with edge $e_i$ at lag $l$, and $\beta_{l,r}$ measures the contribution of the $r$-th neighborhood order at lag $l$. For simplicity, we consider uniform weights, $w_{e_ie_j}=\vert \mathcal{N}^{(r)}(e_i)\vert^{-1}$, whenever $e_j\in\mathcal{N}^{(r)}(e_i)$.

\subsection{A continuous-time modeling framework}
To facilitate a continuous-time formulation, we rewrite the coefficient matrix $\mathbf{Q}_l \in \mathbb{R}^{K \times K}$, for each lag $l = 1,\dots,L$, as
\begin{equation*}
\mathbf{Q}_l := \text{diag}(\alpha_{e_1,l},...,\alpha_{e_K,l})+ \sum_{r=1}^{R_l}\beta_{l,r}\textbf{W}^{(r)}. 
\end{equation*}
With $\bm{\alpha}_l=(\alpha_{e_1,l},\dots,\alpha_{e_K,l})$ and $\bm{\beta}_l = (\beta_{l,1},\dots,\beta_{l,R_l})$, the full parameter vector is then defined by
\begin{equation}
\label{theta}
    \theta := [\bm{\alpha}_1,\bm{\beta}_1,\dots,\bm{\alpha}_L,\bm{\beta}_L]^{\top}\in\R^{LK+ \sum_{l=1}^LR_l}.
\end{equation}

Let $\mathbf{L}_t$ be a $K$-dimensional L\'evy process with L\'evy-Khintchine triplet $(\mathbf{b},\Sigma, \nu)$, where $\mathbf{b}\in\R^K$, $\Sigma\in\mathbb{S}^{+}_K$, and $\nu$ is a L\'evy measure on $\R^K$ satisfying $\int_{\R^K}(1\wedge\Vert\mathbf{z}\Vert^2)\nu(d\mathbf{z})<\infty$. The characteristic function of $\mathbf{L}_t$ is then given, for $\mathbf{u}\in\R^K$ at time $t\geq 0$, by
\begin{equation*}
\E[\exp(i\mathbf{u}^{\top}\mathbf{L}_t)]=\exp\left\lbrace t\left(i\mathbf{u}^{\top}\mathbf{b}-\frac{1}{2}\mathbf{u}^{\top}\Sigma\mathbf{u}+\int_{\R^K\setminus\{0\}}(\exp(i\mathbf{u}^{\top}\mathbf{z})-1-i\mathbf{u}^{\top}\mathbf{z}\mathbbm{1}_{\Vert\mathbf{z}\Vert<1})\nu(d\mathbf{z})\right)\right\rbrace.
\end{equation*}

We are now in the position to define the graph Ornstein-Uhlenbeck process for edge-indexed network time series.
\begin{definition}[Graph Ornstein-Uhlenbeck processes for edge-indexed network time series]
\label{groudef}
    Let $(\mathbf{L}_t)_{t\geq 0}$ be a $K$-dimensional L\'evy process with L\'evy-Khintchine triplet $(\mathbf{b},\Sigma, \nu)$. Following the state-space construction in \cite[Lemma 3.8]{marquardt2007multivariate}, we define the matrices $\mathbb{Q}=\mathbb{Q}(\theta)$ and $\mathcal{E}$ as
\begin{equation*}
    \mathbb{Q}:=\begin{bmatrix}
    \mathbf{0}_{K} & \mathbf{I}_{K} & \mathbf{0}_{K} & \dots & \mathbf{0}_{K}\\
    \mathbf{0}_{K} & \mathbf{0}_{K} & \mathbf{I}_{K}  & \dots & \mathbf{0}_{K}\\
    \vdots & \vdots & \vdots & \ddots & \vdots  \\
    \mathbf{0}_{K} & \mathbf{0}_{K} & \mathbf{0}_{K} & \dots & \mathbf{I}_{K}\\
    -\mathbf{Q}_L & -\mathbf{Q}_{L-1} & -\mathbf{Q}_{L-2} & \dots  & -\mathbf{Q}_1
\end{bmatrix}\in \mathcal{M}_{LK}(\mathbb{R}),~\mathcal{E}:=\begin{bmatrix}
    \mathbf{0}_{K}\\
    \mathbf{0}_{K}\\
    \vdots\\
    \mathbf{0}_{K}\\
    \mathbf{I}_{K}
\end{bmatrix}\in \mathcal{M}_{LK,K}(\mathbb{R}).
\end{equation*}
The graph Ornstein-Uhlenbeck process $\mathbf{Y}_t$ for edge-indexed network time series with maximum lag $L$ and underlying network $\mathcal{G}$ is defined by
\begin{equation*}
\mathbf{Y}_t := \mathbf{A}\mathbf{X}_t\text{ with }\mathbf{A}=[\mathbf{I}_{K},\mathbf{0}_{K},\dots,\mathbf{0}_{K}]\in\mathcal{M}_{K,LK}(\mathbb{R}),
\end{equation*}
where the state vector $\mathbf{X}_t\in\R^{LK}$ satisfies the linear stochastic differential equation
\begin{equation}
\label{model2}
d\mathbf{X}_t = \mathbb{Q}\mathbf{X}_tdt + \mathcal{E}d\mathbf{L}_t,~\mathbf{X}_0=\xi, ~t \geq 0.
\end{equation}
\end{definition}

We abbreviate the above model as grOU$(L,[R_1,\dots,R_L])$, where $L$ denotes the maximum lag in the autoregression and $R_l$ specifies the maximum neighborhood stage at lag $l$. Formally, the process $\{\mathbf{Y}_t,t\geq0\}$ is defined through the differential relation
\begin{equation*}
    D^L\mathbf{Y}_t+\mathbf{Q}_1D^{L-1}\mathbf{Y}_t+ \dots+\mathbf{Q}_{L-1}D\mathbf{Y}_t+\mathbf{Q}_L\mathbf{Y}_t = D\mathbf{L}_t,
\end{equation*}
where the operator $D$ denotes differentiation with respect to $t$. Since $D\mathbf{L}_t$ is, in general, not well-defined as a random function, the above equation is understood in a formal sense via an equivalent $\mathbb{R}^{LK}$-valued state-space representation $\{\mathbf{X}_t,t\geq0\}$. In particular, \cite[Proposition 2.1]{lucchese2026estimation} shows that this representation can be recovered from a realization of $\{\mathbf{Y}_t,t\geq0\}$ through right differentiation. Specifically, 
we write $\mathbf{X}_s=(\mathbf{X}_{s,1}^{\top},\ldots,\mathbf{X}_{s,L}^{\top})^{\top}$, where $\mathbf{X}_{s,l}$ denotes the $l$th $K$-dimensional block corresponding to lag $(l-1)$, for $l=1,\ldots, L$. Then, for $l=1,\dots,L$, the $(l-1)$-fold derivative of the observed process satisfies
\begin{equation*}
    D^{l-1}\mathbf{Y}_{s} = \mathbf{X}_{s,l}.
\end{equation*}

\begin{remark}\label{rem:stationay}
By \cite[Remark 2.1]{lucchese2026estimation}, suppose that $\xi$ is independent of the natural filtration generated by $\mathbf{L}_t$ and has the representation $\xi = \int_{-\infty}^0e^{-s\mathbb{Q}}\mathcal{E}d\mathbf{L}_s$, where we are now working with a two-sided L\'evy process $(\mathbf{L}_t)_{t\in \mathbb{R}}$. If $\mathbb{Q}\in M_{LK}^-$ and the L\'evy measure $\nu$ satisfies the logarithmic moment condition: $\int_{\Vert\mathbf{z}\Vert>1}\log\Vert\mathbf{z}\Vert\nu(d\mathbf{z})<\infty$, then, by \cite[Theorem 4.1]{SATO198473}, equation \eqref{model2} admits a unique strictly stationary solution given by
\begin{equation*}
    \mathbf{X}_t=e^{t\mathbb{Q}}\xi+\int_0^te^{(t-s)\mathbb{Q}}\mathcal{E}d\mathbf{L}_s=\int_{-\infty}^te^{(t-s)\mathbb{Q}}\mathcal{E}d\mathbf{L}_s, ~~\text{for any }t\geq 0.
\end{equation*}
Moreover, by \cite[Proposition 3.34]{marquardt2007multivariate}, if the driving L\'evy process is $r$-integrable for some $r>0$, i.e., $\int_{\Vert \mathbf{z}\Vert\geq1}\Vert \mathbf{z}\Vert^r\nu(d\mathbf{z})<\infty$, then both $\mathbf{X}_t$ and $\mathbf{Y}_t$ are ergodic.
\end{remark}

We are now interested in characterizing the second-order moments of the graph Ornstein-Uhlenbeck process in the stationary setting (see Remark \ref{rem:stationay}). 
\begin{prop}\label{prop:secondorder} Using the notation introduced above, we set $\mathbf{Y}=(\mathbf{Y}_t)_{t\in \mathbb{R}}$, where $\mathbf{Y}_t=\mathbf{A}\mathbf{X}_t=\mathbf{A}\int_{-\infty}^t e^{(t-s)\mathbb{Q}}\mathcal{E}d\mathbf{L}_s$, and we make the assumption $\int_{\Vert\mathbf{z}\Vert>1}\Vert\mathbf{z}\Vert^2\nu(d\mathbf{z})<\infty$ and assume that
$\mathbb{Q}\in M_{LK}^-$, i.e., that $\mathbb{Q}$ is a Hurwitz matrix. We fix the following notation:
$\boldsymbol{\mu}_{\mathbf{L}}=\E(\mathbf{L}_1)=\mathbf{b}+\int_{\Vert\mathbf{z}\Vert>1}\mathbf{z}\nu(d\mathbf{z})$ and $\boldsymbol{\Sigma}_{\mathbf{L}}=\text{Var}(\mathbf{L}_1)=\Sigma +\int_{\mathbb{R}^K}\mathbf{z}\mathbf{z}^{\top} \nu(d\mathbf{z})$. We get the following results: $\mathbb{Q}$ and $\mathbf{Q}_L$ are invertible, and  
    \begin{align*}
  \mathbb{Q}^{-1}
 =
 \begin{bmatrix}
 -\mathbf{Q}_L^{-1}\mathbf{Q}_{L-1} & -\mathbf{Q}_L^{-1}\mathbf{Q}_{L-2} & \dots & -\mathbf{Q}_L^{-1}\mathbf{Q}_1 & -\mathbf{Q}_L^{-1}\\
 \mathbf{I}_{K} & \mathbf{0}_{K} & \dots & \mathbf{0}_{K} & \mathbf{0}_{K}\\
 \mathbf{0}_{K} & \mathbf{I}_{K} & \dots & \mathbf{0}_{K} & \mathbf{0}_{K}\\
 \vdots & \vdots & \ddots & \vdots & \vdots\\
 \mathbf{0}_{K} & \mathbf{0}_{K} & \dots & \mathbf{I}_{K} & \mathbf{0}_{K}
 \end{bmatrix},
 &&  \mathbb{Q}^{-1}\mathcal{E}=
 \begin{bmatrix}
 - \mathbf{Q}_{L}^{-1}\\
    \mathbf{0}_{K}\\
    \mathbf{0}_{K}\\
    \vdots\\
    \mathbf{0}_{K}   
\end{bmatrix}.
\end{align*}
Moreover, the unconditional mean, variance and autocovariance matrix of $\mathbf{Y}$ are given by
\begin{align*}
    \E[\mathbf{Y}_0] 
    &= \mathbf{A}\E(\mathbf{X}_0)=\mathbf{A}\int_{-\infty}^0e^{-s\mathbb{Q}} ds \mathcal{E}\boldsymbol{\mu}_{\mathbf{L}} = -\mathbf{A}\mathbb{Q}^{-1}\mathcal{E}\boldsymbol{\mu}_{\mathbf{L}}=\mathbf{Q}_L^{-1}\boldsymbol{\mu}_{\mathbf{L}},\\
    \mathrm{Var}[\mathbf{Y}_0]
     &=\mathbf{A}\mathrm{Var}[\mathbf{X}_0]\mathbf{A}^{\top}
     =\mathbf{A}\int_{-\infty}^0e^{-s\mathbb{Q}}\mathcal{E}\boldsymbol{\Sigma}_{\mathbf{L}}\mathcal{E}^{\top} e^{-s\mathbb{Q}^{\top}}ds  \mathbf{A}^{\top}   =\mathbf{A}\Gamma\mathbf{A}^{\top},\\
     \mathrm{Cov}(\mathbf{Y}_h, \mathbf{Y}_{0})
     &=\mathbf{A} e^{h\mathbb{Q}} \Gamma \mathbf{A}^{\top}, \quad h \geq 0,
\end{align*}
where $\Gamma$ is the unique solution to the Lyapunov equation $\mathbb{Q}\Gamma + \Gamma\mathbb{Q}^{\top} = -\mathcal{E}\boldsymbol{\Sigma}_{\mathbf{L}}\mathcal{E}^{\top}$. 
\end{prop}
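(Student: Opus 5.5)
Invertibility comes first. Since $\mathbb{Q}\in M_{LK}^-$, every eigenvalue of $\mathbb{Q}$ has strictly negative real part. In particular $0\notin\sigma(\mathbb{Q})$, so $\mathbb{Q}$ is invertible. For $\mathbf{Q}_L$ I would use the companion structure. Expanding $\det\mathbb{Q}$ along the block structure (or using the block factorization $\det(\lambda\mathbf{I}_{LK}-\mathbb{Q})=\det(\lambda^L\mathbf{I}_K+\lambda^{L-1}\mathbf{Q}_1+\dots+\mathbf{Q}_L)$ evaluated at $\lambda=0$) gives $\det\mathbb{Q}=\pm\det\mathbf{Q}_L$. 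Hence $\mathbf{Q}_L$ is invertible. An alternative that avoids determinants: if $\mathbf{Q}_L v=0$ with $v\neq 0$, then $(v^\top,0,\dots,0)^\top$ lies in $\ker\mathbb{Q}$, which is a contradiction.

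The formula for $\mathbb{Q}^{-1}$ is then checked by direct block multiplication. I would write the proposed matrix as $\mathbb{P}$ and verify $\mathbb{Q}\mathbb{P}=\mathbf{I}_{LK}$ block row by block row. The shift rows $1,\dots,L-1$ of $\mathbb{Q}$ pick out rows $2,\dots,L$ of $\mathbb{P}$, which are the shifted identity blocks. For the last block row, the contributions $-\mathbf{Q}_L(-\mathbf{Q}_L^{-1}\mathbf{Q}_{L-j})$ cancel against $-\mathbf{Q}_{L-j}\mathbf{I}_K$, and the last column yields $\mathbf{I}_K$. Multiplying by $\mathcal{E}$ selects the last block column of $\mathbb{P}$, which gives $\mathbb{Q}^{-1}\mathcal{E}$ as stated.

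For the moments I would use the stationary representation from Remark \ref{rem:stationay}. The assumption $\int_{\Vert\mathbf{z}\Vert>1}\Vert\mathbf{z}\Vert^2\nu(d\mathbf{z})<\infty$ implies the logarithmic moment condition, and it also gives $\mathbf{L}$ finite second moments. Writing $\mathbf{L}_s=\boldsymbol{\mu}_{\mathbf{L}}s+\mathbf{M}_s$ with $\mathbf{M}$ a square-integrable martingale, the stochastic integral splits into two parts.

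\textbf{Mean.} The drift part gives $\E\mathbf{X}_0=\int_{-\infty}^0 e^{-s\mathbb{Q}}ds\,\mathcal{E}\boldsymbol{\mu}_{\mathbf{L}}$, since the martingale integral has mean zero. Convergence of this integral uses the Hurwitz bound $\Vert e^{u\mathbb{Q}}\Vert\le Ce^{-\lambda u}$ for some $\lambda>0$. The integral evaluates to $\int_0^\infty e^{u\mathbb{Q}}du=-\mathbb{Q}^{-1}$, because $\frac{d}{du}e^{u\mathbb{Q}}=\mathbb{Q}e^{u\mathbb{Q}}$ and $e^{u\mathbb{Q}}\to0$. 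Applying $\mathbf{A}$ to $-\mathbb{Q}^{-1}\mathcal{E}\boldsymbol{\mu}_{\mathbf{L}}$ and using the first block of $\mathbb{Q}^{-1}\mathcal{E}$ gives $\mathbf{Q}_L^{-1}\boldsymbol{\mu}_{\mathbf{L}}$.

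\textbf{Variance.} The Itô isometry for Lévy integrals with deterministic integrands gives $\mathrm{Var}\,\mathbf{X}_0=\int_0^\infty e^{u\mathbb{Q}}\mathcal{E}\boldsymbol{\Sigma}_{\mathbf{L}}\mathcal{E}^\top e^{u\mathbb{Q}^\top}du=:\Gamma$. Here $\boldsymbol{\Sigma}_{\mathbf{L}}$ is the covariance of $\mathbf{L}_1$, equal to the Gaussian part plus the second moment of $\nu$.

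\textbf{Lyapunov equation.} Differentiating $e^{u\mathbb{Q}}Ce^{u\mathbb{Q}^\top}$ in $u$, integrating over $[0,\infty)$, and using the exponential decay gives $\mathbb{Q}\Gamma+\Gamma\mathbb{Q}^\top=-\mathcal{E}\boldsymbol{\Sigma}_{\mathbf{L}}\mathcal{E}^\top$. Uniqueness follows because the operator $X\mapsto\mathbb{Q}X+X\mathbb{Q}^\top$ has eigenvalues $\lambda_i+\lambda_j$, all with negative real part, so it is injective.

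\textbf{Autocovariance.} For $h\ge0$ write $\mathbf{X}_h=e^{h\mathbb{Q}}\mathbf{X}_0+\int_0^h e^{(h-s)\mathbb{Q}}\mathcal{E}d\mathbf{L}_s$. The second term is independent of $\mathbf{X}_0$ by independent increments, so $\mathrm{Cov}(\mathbf{X}_h,\mathbf{X}_0)=e^{h\mathbb{Q}}\Gamma$. Sandwiching with $\mathbf{A}$ gives all three displayed formulas.

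The main obstacle is justifying the moment computations for the improper integral $\int_{-\infty}^0$: that the mean and variance pass inside, and that the Itô isometry applies on an infinite horizon. I would handle this by truncating to $[-T,0]$, where everything is standard, and letting $T\to\infty$. The limit is an $L^2$ limit, justified by the exponential decay of $e^{u\mathbb{Q}}$ together with the finite second moment of $\mathbf{L}$.
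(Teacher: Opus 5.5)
Your proof is correct and follows the same route as the paper, whose proof consists only of the remark that the proposition follows from straightforward calculations. Your steps are exactly those calculations written out: invertibility of $\mathbf{Q}_L$ via the kernel vector $(v^{\top},0,\dots,0)^{\top}$ of $\mathbb{Q}$, block verification of $\mathbb{Q}^{-1}$, $\int_0^\infty e^{u\mathbb{Q}}\,du=-\mathbb{Q}^{-1}$ for the mean, the It\^o isometry together with integration of $\frac{d}{du}\bigl(e^{u\mathbb{Q}}Ce^{u\mathbb{Q}^{\top}}\bigr)$ for the variance and Lyapunov equation, independence of increments for the autocovariance, and truncation to $[-T,0]$ to justify the infinite-horizon moments.
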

\begin{proof} The proof of this proposition follows from straightforward calculations.
\end{proof}
\begin{remark}
We note that the formula for the stationary mean simplifies such that it only depends on the inverse of the coefficient matrix at the maximal lag $L$ and the mean of the driving L\'{e}vy process.
\end{remark}
Next, we derive the conditional mean and conditional covariance of $\mathbf{Y}_{t+h}$ given $\mathcal{F}_t$ since these quantities are relevant if one is interested in prediction.
\begin{prop}\label{prop:secondorder-cond} Under the assumption of Proposition \ref{prop:secondorder}, let $t\in \mathbb{R},\, h \geq 0$, then the conditional mean and variance satisfy
\begin{align*}
    \mathbb{E}[\textbf{Y}_{t+h}\vert\mathcal{F}_t] &=
    \E[\mathbf{Y}_0]+\mathbf{A}e^{h\mathbb{Q}}[\mathbf{X}_{t}-\E[\mathbf{X}_0]], \quad \mathrm{where}\,\, \E[\mathbf{X}_0]=-\mathbb{Q}^{-1} \mathcal{E}\boldsymbol{\mu}_{\mathbf{L}}, \\
    \mathrm{Var}[\textbf{Y}_{t+h}\vert\mathcal{F}_t] &=
    \mathbf{A} \left(\int_{0}^{h}e^{u\mathbb{Q}}\mathcal{E}\boldsymbol{\Sigma}_L\mathcal{E}^{\top} e^{u\mathbb{Q}^{\top}}du\right)\mathbf{A}^{\top}.
\end{align*}
\end{prop}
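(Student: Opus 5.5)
We start from the variation-of-constants representation of the stationary solution in Remark \ref{rem:stationay}, written between times $t$ and $t+h$:
\begin{equation*}
\mathbf{X}_{t+h}=e^{h\mathbb{Q}}\mathbf{X}_t+\int_t^{t+h}e^{(t+h-s)\mathbb{Q}}\mathcal{E}\,d\mathbf{L}_s .
\end{equation*}
This follows either from applying It\^o's product rule to $e^{-s\mathbb{Q}}\mathbf{X}_s$ on $[t,t+h]$, or directly by splitting the integral $\int_{-\infty}^{t+h}$ at $t$ and factoring $e^{h\mathbb{Q}}$ out of the part over $(-\infty,t]$. Multiplying by $\mathbf{A}$ gives $\mathbf{Y}_{t+h}$. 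The first term is $\mathcal{F}_t$-measurable. The stochastic integral depends only on the increments of $\mathbf{L}$ on $(t,t+h]$, and these are independent of $\mathcal{F}_t$ by the independent-increment property; here we take $\mathcal{F}_t$ to be generated by $\mathbf{L}$ up to time $t$, together with $\xi$, which is independent of the future increments. Hence the conditional moments of $\mathbf{Y}_{t+h}$ are the unconditional moments of the integral term, shifted by the known quantity $\mathbf{A}e^{h\mathbb{Q}}\mathbf{X}_t$.

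\textbf{Conditional mean.} Since the integrand is deterministic and $\mathbf{L}$ has finite first moment (guaranteed by the second-moment assumption on $\nu$), we have
\begin{equation*}
\E\Big[\int_t^{t+h}e^{(t+h-s)\mathbb{Q}}\mathcal{E}\,d\mathbf{L}_s\Big]=\int_0^h e^{u\mathbb{Q}}\,du\,\mathcal{E}\boldsymbol{\mu}_{\mathbf{L}}.
\end{equation*}
Because $\mathbb{Q}$ is invertible (Proposition \ref{prop:secondorder}), $\int_0^h e^{u\mathbb{Q}}du=\mathbb{Q}^{-1}(e^{h\mathbb{Q}}-\mathbf{I})$. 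The integral term therefore has mean $(e^{h\mathbb{Q}}-\mathbf{I})\mathbb{Q}^{-1}\mathcal{E}\boldsymbol{\mu}_{\mathbf{L}}$, using that $\mathbb{Q}^{-1}$ commutes with $e^{h\mathbb{Q}}$. Substituting $\E[\mathbf{X}_0]=-\mathbb{Q}^{-1}\mathcal{E}\boldsymbol{\mu}_{\mathbf{L}}$ from Proposition \ref{prop:secondorder}, this equals $\E[\mathbf{X}_0]-e^{h\mathbb{Q}}\E[\mathbf{X}_0]$. Adding $e^{h\mathbb{Q}}\mathbf{X}_t$, applying $\mathbf{A}$, and using $\mathbf{A}\E[\mathbf{X}_0]=\E[\mathbf{Y}_0]$ yields the stated formula.

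\textbf{Conditional variance.} The $\mathcal{F}_t$-measurable term contributes nothing to the conditional variance. The variance of a Wiener-type integral of a deterministic matrix function against a square-integrable L\'evy process is
\begin{equation*}
\int f(s)\,\mathcal{E}\boldsymbol{\Sigma}_{\mathbf{L}}\mathcal{E}^{\top}f(s)^{\top}\,ds .
\end{equation*}
This is the same isometry already used in Proposition \ref{prop:secondorder}, and it can be justified via the characteristic function, or by approximating the integral with Riemann sums of independent increments. With $f(s)=e^{(t+h-s)\mathbb{Q}}$ and the substitution $u=t+h-s$, we obtain $\int_0^h e^{u\mathbb{Q}}\mathcal{E}\boldsymbol{\Sigma}_{\mathbf{L}}\mathcal{E}^{\top}e^{u\mathbb{Q}^{\top}}du$. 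Conjugating by $\mathbf{A}$ gives the claim.

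\textbf{Main obstacle.} The only genuinely delicate point is the independence argument. We must make precise that $\mathcal{F}_t$ is independent of the increments $(\mathbf{L}_s-\mathbf{L}_t)_{s\ge t}$. If this fails for a general filtration, we would instead assume that $\mathbf{L}$ is an $(\mathcal{F}_t)$-L\'evy process, which is the standard convention. The remaining computations are routine matrix-exponential identities.
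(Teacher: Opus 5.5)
Your proposal is correct and follows essentially the same route as the paper's proof in Appendix A: split $\mathbf{X}_{t+h}=e^{h\mathbb{Q}}\mathbf{X}_t+\int_t^{t+h}e^{(t+h-s)\mathbb{Q}}\mathcal{E}\,d\mathbf{L}_s$, use independence of the future increments of $\mathbf{L}$ from $\mathcal{F}_t$, and evaluate the mean via $\int_0^h e^{u\mathbb{Q}}du=\mathbb{Q}^{-1}(e^{h\mathbb{Q}}-\mathbf{I})$ and the variance via the second-moment isometry. Your remark that $\mathbf{L}$ must be an $(\mathcal{F}_t)$-L\'evy process is a useful clarification of what the paper simply calls the independent increments property.
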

\begin{proof}
The result can be derived from the independent increments property of $\textbf{L}_t$, as detailed in Appendix \ref{prop_proof}.
\end{proof}

\section{Parameter estimation}
\label{Parameter Estimation}
The grOU edge model can be considered as a special case of the multivariate continuous-time autoregressive (MCAR) framework, where we incorporate a known and static network structure into the dynamics of the observed process. In this setting, the components of an MCAR process can be interpreted as edges of a graph endowed with a neighborhood structure. The connectivity between edges is encoded through the edge-based adjacency matrices introduced in Section \ref{Preliminaries and Model Setup}, which, in turn, determine the structure of the coefficient matrices defined in \eqref{weight_matrix}. Exploiting this graphical representation, we derive the explicit estimator for the model parameter vector $\theta$ defined in \eqref{theta} under both continuous and discrete observation regimes.

\begin{theorem}[\cite{lucchese2026estimation} Theorem 5.1]
\label{thm}
    Let $\{\mathbf{Y}_s,s\in[0,t]\}$ be observations of a stationary and ergodic $\mathrm{grOU}(L, [R_1,\dots,R_L])$ process with parameter
    \begin{equation*}
    \theta^*\in\Theta:=\left\lbrace\theta\in\R^{LK+\sum_{l=1}^LR_l}:\mathbb{Q}(\theta)\in M_K^-\right\rbrace,
    \end{equation*}
    and let $(\mathbf{W}^{(r)})_{r=1, \ldots, R_l}$ denote the collection of weighted neighbor matrices at lag $l$ defined in \eqref{weight_matrix}. Assume that the process is defined under $\mathbb{P}^{\theta^*}$ and driven by a square-integrable L\'evy process. Let $\mathbf{Q}^{(0)}=(\mathbf{0}_K,\dots,\mathbf{0}_K)$, and denote by $D^{L-1}\mathbf{Y}^c_{\mathbf{Q}^{(0)},t}$ the $(L-1)$-th lag component of the continuous local martingale part of $\mathbf{X}_t$ defined in \eqref{martingale}. Define
    \begin{equation*}
        \mathbf{H}_s := \begin{bmatrix}
    \mathrm{diag}\left(D^{L-1}\mathbf{Y}_{s-}\right)\\
    (\mathbf{W}^{(1)}D^{L-1}\mathbf{Y}_{s-})^{\top}\\
    \vdots\\
    (\mathbf{W}^{(R_1)}D^{L-1}\mathbf{Y}_{s-})^{\top}\\
    \vdots\\
    \mathrm{diag}\left(\mathbf{Y}_{s-}\right)\\
    (\mathbf{W}^{(1)}\mathbf{Y}_{s-})^{\top}\\
    \vdots\\
    (\mathbf{W}^{(R_L)}\mathbf{Y}_{s-})^{\top}\\
    \end{bmatrix}\in\mathcal{M}_{LK+\sum_{l=1}^LR_l,K}(\mathbb{R}).
    \end{equation*}
    Then the maximum likelihood estimator given by 
    \begin{equation*}
\theta_{MLE}^*\left(\mathbf{Y}_{[0,t]}\right)=[\mathbf{K}]_t^{-1}\mathbf{K}_t,
    \end{equation*}
    where
\begin{equation}
\label{K_t}
    \mathbf{K}_t=-\int_0^t\mathbf{H}_s\Sigma^{-1}dD^{L-1}\mathbf{Y}^{c}_{\mathbf{Q}^{(0)},s}
\text{ and }
    [\mathbf{K}]_t=\int_0^t\mathbf{H}_s\Sigma^{-1}\mathbf{H}_s^{\top}ds,
\end{equation}
is
\begin{itemize}
    \item (weakly) consistent, i.e., as $t\to\infty$,
    \begin{equation*}
\theta_{MLE}^*\left(\mathbf{Y}_{[0,t]}\right)\overset{\mathbb{P}^{\theta^*}}{\longrightarrow}{\theta}^*;
    \end{equation*}
    \item asymptotically normal, i.e., as $t\to\infty$,
    \begin{equation*}
\sqrt{t}\left(\theta_{MLE}^*\left(\mathbf{Y}_{[0,t]}\right)-\theta^*\right)\overset{D}{\longrightarrow}Z\sim \mathcal{N}\left(\mathbf{0},\mathbf{K}_{\infty}^{-1}\right),
    \end{equation*}
where $\mathbf{K}_{\infty}\in\mathcal{M}_{LK+\sum_{l=1}^LR_l}(\mathbb{R})$ is a symmetric positive definite matrix such that $t^{-1}[\mathbf{K}]_t\overset{\mathbb{P}^{\theta^*}}{\longrightarrow}\mathbf{K}_{\infty}, t\to\infty$.
\end{itemize}
\end{theorem}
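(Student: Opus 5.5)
The plan is to reduce the statement to the general MCAR result of \cite{lucchese2026estimation} by showing that the grOU edge model is a \emph{linearly parametrised} MCAR process. The observation that drives everything is that the top block row of the drift acting on the last state component is linear in $\theta$. Writing $\mathbf{X}_{s,L}=D^{L-1}\mathbf{Y}_s$, the last block of \eqref{model2} reads
\begin{equation*}
dD^{L-1}\mathbf{Y}_s=-\sum_{l=1}^L\mathbf{Q}_l D^{L-l}\mathbf{Y}_{s-}\,ds+d\mathbf{L}_s .
\end{equation*}
Since $\mathbf{Q}_l=\mathrm{diag}(\bm{\alpha}_l)+\sum_r\beta_{l,r}\mathbf{W}^{(r)}$, each product can be rewritten as
\begin{equation*}
\mathbf{Q}_lD^{L-l}\mathbf{Y}_{s-}=\mathrm{diag}(D^{L-l}\mathbf{Y}_{s-})\bm{\alpha}_l+\sum_r\beta_{l,r}\mathbf{W}^{(r)}D^{L-l}\mathbf{Y}_{s-}.
\end{equation*}
Stacking these over $l$ gives $\sum_l\mathbf{Q}_lD^{L-l}\mathbf{Y}_{s-}=\mathbf{H}_s^{\top}\theta$, where $\mathbf{H}_s$ is exactly the matrix in the statement and its blocks are ordered consistently with \eqref{theta}. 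The drift is therefore $-\mathbf{H}_s^\top\theta\,ds$. This places us in the setting of \cite[Theorem 5.1]{lucchese2026estimation}, where the drift is $\mathbb{Q}(\theta)$ with $\theta\mapsto(\mathbf{Q}_1,\dots,\mathbf{Q}_L)$ linear and injective.

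The first step is the likelihood. By Girsanov's theorem for semimartingales (the Jacod--Shiryaev framework used in \cite{lucchese2026estimation}), the measures $\mathbb{P}^\theta$ restricted to $\mathcal{F}_t$ are mutually absolutely continuous with respect to the reference measure $\mathbb{P}^{\mathbf{Q}^{(0)}}$. This holds because the parameter enters only through the drift of the continuous martingale part, and the jump measure does not depend on $\theta$. The log-likelihood is then
\begin{equation*}
\ell_t(\theta)=\theta^\top\mathbf{K}_t-\tfrac12\theta^\top[\mathbf{K}]_t\theta,
\end{equation*}
with $\mathbf{K}_t$ and $[\mathbf{K}]_t$ as in \eqref{K_t}. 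This function is quadratic in $\theta$, so provided $[\mathbf{K}]_t$ is invertible it is maximised at $[\mathbf{K}]_t^{-1}\mathbf{K}_t$.

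The second step is the error decomposition. Under $\mathbb{P}^{\theta^*}$ we have
\begin{equation*}
dD^{L-1}\mathbf{Y}^c_{\mathbf{Q}^{(0)},s}=-\mathbf{H}_s^\top\theta^*ds+d\mathbf{M}^c_s,
\end{equation*}
where $\mathbf{M}^c$ is the Brownian part with covariance $\Sigma$. It follows that
\begin{equation*}
\theta^*_{MLE}-\theta^*=-[\mathbf{K}]_t^{-1}\int_0^t\mathbf{H}_s\Sigma^{-1}d\mathbf{M}^c_s,
\end{equation*}
and the stochastic integral on the right is a continuous martingale with quadratic variation $[\mathbf{K}]_t$.

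The third step is the law of large numbers and the martingale CLT. Because the process is stationary and ergodic (Remark \ref{rem:stationay}) with finite second moments (Proposition \ref{prop:secondorder}), the ergodic theorem gives
\begin{equation*}
t^{-1}[\mathbf{K}]_t\to\mathbf{K}_\infty=\E[\mathbf{H}_0\Sigma^{-1}\mathbf{H}_0^\top]
\end{equation*}
almost surely, hence in probability. Consistency then follows from the martingale LLN, since $t^{-1}$ times the martingale tends to $0$. Asymptotic normality follows from the CLT for continuous martingales, which yields the limit $\mathcal{N}(0,\mathbf{K}_\infty)$ for $t^{-1/2}$ times the martingale, combined with Slutsky's lemma to give $\mathcal{N}(\mathbf{0},\mathbf{K}_\infty^{-1})$.

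The main obstacle is proving that $\mathbf{K}_\infty$ is positive definite, which also gives invertibility of $[\mathbf{K}]_t$ for large $t$. The approach is to take $\theta\neq0$ and suppose $\theta^\top\mathbf{K}_\infty\theta=0$. Then $\mathbf{H}_0^\top\theta=\sum_l\mathbf{Q}_l(\theta)D^{L-l}\mathbf{Y}_0=0$ almost surely. Because $\Gamma=\mathrm{Var}(\mathbf{X}_0)$ is nondegenerate, this forces $\mathbf{Q}_l(\theta)=0$ for all $l$. Nondegeneracy of $\Gamma$ holds by controllability of $(\mathbb{Q},\mathcal{E})$ in companion form together with $\Sigma\in\mathbb{S}^{++}_K$; the latter is implicitly required, since $\Sigma^{-1}$ appears in the estimator. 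Finally, $\mathbf{Q}_l(\theta)=0$ forces $\bm{\alpha}_l=0$ and $\beta_{l,r}=0$. This uses injectivity of the parametrisation: the $\mathbf{W}^{(r)}$ have zero diagonal and mutually disjoint supports, because the stages $\mathcal{N}^{(r)}$ are disjoint, and each nonzero $\mathbf{W}^{(r)}$ has positive entries. If some stage $r\le R_l$ is empty for every edge, the corresponding $\beta_{l,r}$ is not identifiable, so I would add nonemptiness as a standing assumption.
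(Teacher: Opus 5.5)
Your proposal is correct. Its first half is a compact version of the paper's argument in Appendix \ref{thm_proof}: your identity $\sum_{l}\mathbf{Q}_l D^{L-l}\mathbf{Y}_{s-}=\mathbf{H}_s^{\top}\theta$ is what the paper's index-by-index expansion of the Girsanov exponent establishes, and it gives the same quadratic log-likelihood as \eqref{likelihoodquad}.

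The routes then diverge. The paper stops at the likelihood and appeals to \cite[Theorem 3.1]{lucchese2026estimation} for the MLE, consistency and asymptotic normality. You instead prove the asymptotics directly, from the error representation $\theta^*_{MLE}-\theta^*=-[\mathbf{K}]_t^{-1}\int_0^t\mathbf{H}_s\Sigma^{-1}d\mathbf{M}^c_s$, the ergodic theorem and the CLT for continuous martingales, and you verify $\mathbf{K}_\infty\succ 0$ explicitly.

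The paper's route is shorter and inherits all technical conditions from the MCAR theory. However, it leaves implicit why a result for the unrestricted MCAR drift transfers to the network-restricted submodel. Writing $\mathrm{vec}[\mathbf{Q}_L,\dots,\mathbf{Q}_1]=J\theta$, one has $\mathbf{H}_0^{\top}=(\mathbf{X}_0^{\top}\otimes\mathbf{I}_K)J$, and hence $\mathbf{K}_\infty=J^{\top}\bigl(\E[\mathbf{X}_0\mathbf{X}_0^{\top}]\otimes\Sigma^{-1}\bigr)J$. The middle factor is positive definite by your controllability argument, so $\mathbf{K}_\infty$ is positive definite exactly when $J$ is injective.

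Your self-contained route makes this identifiability step explicit, and in doing so exposes a hypothesis missing from the statement. If some stage $r\le R_l$ is empty for every edge, then $\mathbf{W}^{(r)}=\mathbf{0}_K$ and the corresponding row of $\mathbf{H}_s$ vanishes identically. Then $[\mathbf{K}]_t$ is singular and the estimator is undefined, so your nonemptiness assumption is genuinely needed.
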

\begin{proof}
    See Appendix \ref{thm_proof}.
\end{proof}

The continuous-time observation framework considered in Theorem \ref{thm} is, however, not realistic in most applications. In practice, data are observed on discrete (and possibly irregular) time grids. To accommodate this, we introduce a discretized estimation procedure. Assume that the process is observed on a discrete grid, and we replace the $l$-th derivative operator $D^l$ from the continuous-time formulation with the forward $l$-th finite-difference approximation, denoted by the operator $\hat{D}^l$. Convergence of higher-order finite differences requires a uniformity condition on the observation grid. The detailed construction of the discretization scheme and the associated assumptions are provided in Appendix \ref{discrete}.

The following result provides a discrete analog of the continuous-time estimator and corresponds to the discrete version of \cite[Theorem 5.1]{lucchese2026estimation}. This estimator is used throughout our simulation and empirical analyzes.

\begin{theorem}
\label{thm2}
Let $\mathbf{Y}_{\mathcal{P}_t}$ denote a discretely sampled version of $\{\mathbf{Y}_s,s\in[0,t]\}$, where $\{\mathcal{P}_t;t\geq 0\}$ is a countable sequence of partitions with refinements $\{\mathcal{Q}_t;t\geq 0\}$ and associated threshold sequences $\bm{\nu}^m_t$ satisfying the assumptions stated in Appendix \ref{discrete}. Define
\begin{equation*}
    \hat{\mathbf{H}}_s = \begin{bmatrix}
    \mathrm{diag}\left(\hat{D}^{L-1}\mathbf{Y}_{s}\right)\\
    (\mathbf{W}^{(1)}\hat{D}^{L-1}\mathbf{Y}_{s})^{\top}\\
    \vdots\\
    (\mathbf{W}^{(R_1)}\hat{D}^{L-1}\mathbf{Y}_{s})^{\top}\\
    \vdots\\
    \mathrm{diag}\left(\mathbf{Y}_{s}\right)\\
    (\mathbf{W}^{(1)}\mathbf{Y}_{s})^{\top}\\
    \vdots\\
    (\mathbf{W}^{(R_L)}\mathbf{Y}_{s})^{\top}\\
    \end{bmatrix}\in\mathcal{M}_{LK+\sum_{l=1}^LR_l,K}(\mathbb{R}).
\end{equation*}
Then the discretized estimator
    \begin{equation*}
    \theta_D^*\left(\mathbf{Y}_{\mathcal{P}_t};\mathcal{Q}_t,\bm{\nu}_t\right):= \left([\mathbf{K}]^{-1}_{\mathcal{P}_t,\mathcal{Q}_t}\mathbf{K}_{\mathcal{P}_t,\mathcal{Q}_t,\bm{\nu}_t}\right),     
    \end{equation*}
    where
    \begin{equation*}
\mathbf{K}_{\mathcal{P}_t,\mathcal{Q}_t,\bm{\nu}_t}=-\sum_{m=0}^{M_t-1}\hat{\mathbf{H}}_{u_m} \Sigma^{-1}\left[\Delta^m_{\mathcal{Q}_t}\hat{D}^{L-1}\mathbf{Y}-\mathbf{b}\Delta^m_{\mathcal{Q}_t}\right]\odot\mathbbm{1}_{\left\lbrace \vert \Delta^m_{\mathcal{Q}_t}\hat{D}^{L-1}\mathbf{Y}-\mathbf{b}\Delta^m_{\mathcal{Q}_t}\vert\leq\bm{\nu}^m_t\right\rbrace}
    \end{equation*} and
    \begin{equation*}
        [\mathbf{K}]_{\mathcal{P}_t,\mathcal{Q}_t}=\sum_{m=0}^{M_t-1}\hat{\mathbf{H}}_{u_m}\Sigma^{-1}\hat{\mathbf{H}}_{u_m}^{\top}(u_{m+1}-u_m),
    \end{equation*}
is
    \begin{itemize}
    \item (weakly) consistent, i.e., as $t\to\infty$,
    \begin{equation*}
    \theta_D^*\left(\mathbf{Y}_{\mathcal{P}_t};\mathcal{Q}_t,\bm{\nu}_t\right)\overset{\mathbb{P}^{\theta^*}}{\longrightarrow}\theta^*;
    \end{equation*}
    \item asymptotically normal, i.e., as $t\to\infty$,
    \begin{equation*}
        \sqrt{t}\left(\theta_{D}^*\left(\mathbf{Y}_{\mathcal{P}_t};\mathcal{Q}_t,\bm{\nu}_t\right)-\theta^*\right)\overset{D}{\longrightarrow}Z\sim \mathcal{N}\left(\mathbf{0},\hat{\mathbf{K}}^{-1}_{\infty}\right),
    \end{equation*}
    where $\hat{\mathbf{K}}_{\infty}\in\mathcal{M}_{LK+\sum_{l=1}^LR_l}(\mathbb{R})$ is a symmetric positive definite matrix such that $t^{-1}[\mathbf{K}]_{\mathcal{P}_t,\mathcal{Q}_t}\overset{\mathbb{P}^{\theta^*}}{\longrightarrow}\hat{\mathbf{K}}_{\infty}, t\to\infty$.
\end{itemize}
\end{theorem}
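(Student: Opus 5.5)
The plan is to reduce Theorem \ref{thm2} to Theorem \ref{thm} by showing that the discretized quantities are asymptotically equivalent to their continuous-time counterparts at the relevant scales. Concretely, I will show
\begin{equation*}
t^{-1}\left([\mathbf{K}]_{\mathcal{P}_t,\mathcal{Q}_t}-[\mathbf{K}]_t\right)\overset{\mathbb{P}^{\theta^*}}{\longrightarrow}\mathbf{0}
\end{equation*}
and
\begin{equation*}
t^{-1/2}\left(\mathbf{K}_{\mathcal{P}_t,\mathcal{Q}_t,\bm{\nu}_t}-\mathbf{K}_t\right)-t^{-1/2}\left([\mathbf{K}]_{\mathcal{P}_t,\mathcal{Q}_t}-[\mathbf{K}]_t\right)\theta^*\overset{\mathbb{P}^{\theta^*}}{\longrightarrow}\mathbf{0}.
\end{equation*}
Given these two limits, write
\begin{equation*}
\sqrt{t}(\theta_D^*-\theta^*)=\left(t^{-1}[\mathbf{K}]_{\mathcal{P}_t,\mathcal{Q}_t}\right)^{-1}t^{-1/2}\left(\mathbf{K}_{\mathcal{P}_t,\mathcal{Q}_t,\bm{\nu}_t}-[\mathbf{K}]_{\mathcal{P}_t,\mathcal{Q}_t}\theta^*\right).
\end{equation*}
The continuous analogue $t^{-1/2}(\mathbf{K}_t-[\mathbf{K}]_t\theta^*)=-t^{-1/2}\int_0^t\mathbf{H}_s\Sigma^{-1}dW$-type martingale is asymptotically $\mathcal{N}(\mathbf{0},\mathbf{K}_\infty)$ by the proof of Theorem \ref{thm}. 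Slutsky's lemma then yields both consistency and asymptotic normality, with $\hat{\mathbf{K}}_\infty=\mathbf{K}_\infty$ (the stated limit).

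\emph{Step 1: the Riemann-sum term.} The difference $[\mathbf{K}]_{\mathcal{P}_t,\mathcal{Q}_t}-[\mathbf{K}]_t$ splits into two pieces. The first is $\sum_m\int_{u_m}^{u_{m+1}}(\hat{\mathbf{H}}_{u_m}\Sigma^{-1}\hat{\mathbf{H}}_{u_m}^\top-\mathbf{H}_s\Sigma^{-1}\mathbf{H}_s^\top)ds$, which is controlled by the error $\hat{D}^{l}\mathbf{Y}_{u_m}-D^{l}\mathbf{Y}_{u_m}$ for $l\le L-1$ and by the modulus of continuity of $D^{l}\mathbf{Y}=\mathbf{X}_{\cdot,l+1}$ over a mesh interval. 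For $l<L-1$ these components are $C^1$ with derivative $\mathbf{X}_{\cdot,l+2}$, so Taylor expansion together with stationarity and square integrability (Remark \ref{rem:stationay}, Proposition \ref{prop:secondorder}) gives an $L^1$ bound of order mesh times $t$. The top component $\mathbf{X}_{\cdot,L}$ is only càdlàg, so I will bound its increments in $L^2$ by $O(\Delta^{1/2})$. The uniform-grid and mesh-rate assumptions of Appendix \ref{discrete} are exactly what make $\hat{D}^{L-1}$ converge and make these errors $o(1)$ after dividing by $t$.

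\emph{Step 2: the stochastic-integral term, which is the main obstacle.} Here one must show that the thresholded increments $\Delta^m_{\mathcal{Q}_t}\hat{D}^{L-1}\mathbf{Y}-\mathbf{b}\Delta^m_{\mathcal{Q}_t}$ recover the increments of the continuous martingale part $D^{L-1}\mathbf{Y}^c_{\mathbf{Q}^{(0)}}$ plus the drift $-\sum_l\mathbf{Q}_l\mathbf{X}_{\cdot,L-l+1}\,ds$, with an error that is $o(t^{1/2})$. I will decompose each increment into a drift part, a Brownian part and a jump part, and then argue in three pieces. First, truncation with thresholds $\bm{\nu}^m_t$ decaying slower than $\Delta^{1/2}\sqrt{\log(1/\Delta)}$ keeps the Brownian increments with probability tending to one uniformly in $m$, by a union bound over the $M_t$ intervals using Gaussian tails. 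Second, big jumps are removed, and the small jumps that survive the threshold contribute negligibly; this is Mancini-type threshold estimation, for which I will use the jump-activity condition assumed in Appendix \ref{discrete}. Third, the finite-difference error in $\hat{D}^{L-1}$ at the top level has to be handled carefully, since differencing lower components reproduces $\mathbf{X}_{\cdot,L}$ only up to $O(\Delta)$ plus integrated noise. After this decomposition, the drift part reproduces $[\mathbf{K}]_{\mathcal{P}_t,\mathcal{Q}_t}\theta^*$ up to the Step 1 error. The martingale part differs from $\int\mathbf{H}_s\Sigma^{-1}dW$ by the discrete-time martingale $\sum_m\int_{u_m}^{u_{m+1}}(\hat{\mathbf{H}}_{u_m}-\mathbf{H}_s)\Sigma^{-1}dW_s$. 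By the Itô isometry its second moment is of order $t$ times the Step 1 error, so it is $o_p(t^{1/2})$ provided the mesh tends to zero as $t\to\infty$, which is the rate condition I will impose.

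With both steps in place, Slutsky's lemma combined with the continuous-time CLT from Theorem \ref{thm} completes the argument.
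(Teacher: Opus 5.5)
Your reduction is the same as the paper's. The paper's proof only asserts, citing Lucchese et al., that the discretized $\mathbf{K}$ and $[\mathbf{K}]$ are asymptotically equivalent to their continuous-time counterparts, and your Slutsky identity for $\sqrt{t}(\theta_D^*-\theta^*)$ with the recentred score is a correct way to make that precise. The gap is in your own verification of the second limit, which lives at scale $t^{-1/2}$. After recentring, the drift part of the increments leaves
\[
\sum_{m=0}^{M_t-1}\hat{\mathbf{H}}_{u_m}\Sigma^{-1}\int_{u_m}^{u_{m+1}}\bigl(\mathbf{H}_s-\hat{\mathbf{H}}_{u_m}\bigr)^{\top}ds\,\theta^*,
\]
which must be $o_p(\sqrt{t})$. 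Calling this ``the Step 1 error'' does not help, because Step 1 only shows that error is $o_p(t)$. Your own bounds give $O_p(t\Delta_{\mathcal{Q}_t}^{1/2})$, and the drift of $\mathbf{X}$ across each coarse interval gives this term a generically non-zero mean of order $t\Delta_{\mathcal{Q}_t}$. So ``mesh $\to 0$'' is not the right condition: with $\Delta_{\mathcal{Q}_t}=t^{-1/4}$ the mesh vanishes, but this term is of order $t^{3/4}$ and $\sqrt{t}(\theta_D^*-\theta^*)$ diverges. What your bounds require is $t\Delta_{\mathcal{Q}_t}\to 0$, i.e. Assumption \ref{assumptionB2}.

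More seriously, you flag the top-level finite-difference error for $L\ge 2$ but leave it open. It is the heart of the discretization argument and exactly where the two-grid structure and Assumption \ref{P_Q} are needed. $\hat{D}^{L-1}\mathbf{Y}_{u}$ is essentially a weighted average of $\mathbf{X}_{\cdot,L}$ over the look-ahead window $[u,u+(L-1)\Delta_{\mathcal{P}_t}]$, so it differs from $D^{L-1}\mathbf{Y}_u$ by $O_p(\Delta_{\mathcal{P}_t}^{1/2})$. This error enters both $\hat{\mathbf{H}}_{u_m}$ and the increments. Summing over the roughly $t/\Delta_{\mathcal{Q}_t}$ coarse intervals gives $O_p(t\Delta_{\mathcal{P}_t}^{1/2}/\Delta_{\mathcal{Q}_t})$, which is $o_p(\sqrt{t})$ precisely when $\Delta_{\mathcal{P}_t}=o(t^{-1}\Delta_{\mathcal{Q}_t}^{2})$.

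This is not cosmetic. If $\mathcal{P}_t=\mathcal{Q}_t$, the look-ahead window of $\hat{\mathbf{H}}_{u_m}$ overlaps the Brownian part of the increment it multiplies, creating a bias of the same order as the drift signal. For $L=2$, $K=1$ and Brownian noise, the coefficient of $D\mathbf{Y}$ converges to $\frac{2}{3}$ of its true value, so even consistency fails.

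The same look-ahead makes $\hat{\mathbf{H}}_{u_m}$ non-$\mathcal{F}_{u_m}$-measurable. The It\^o isometry you apply to $\sum_m\int_{u_m}^{u_{m+1}}(\hat{\mathbf{H}}_{u_m}-\mathbf{H}_s)\Sigma^{-1}dW_s$ is therefore not directly available. You must split each increment at $u_m+(L-1)\Delta_{\mathcal{P}_t}$ and bound the anticipating piece, which again costs $\Delta_{\mathcal{P}_t}=o(t^{-1}\Delta_{\mathcal{Q}_t}^{2})$.

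With Assumptions \ref{assumptionB2} and \ref{P_Q} used at these points, as in the cited approximation results, your outline closes. As written, the decisive estimate is asserted rather than proved, and the rate you propose to impose is insufficient.
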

\begin{proof}
    See Appendix \ref{thm2_proof}.
\end{proof}

\section{Simulation study}
\label{Simulation study}
We carry out a simulation study to answer the following three questions: First, how well does the proposed estimation procedure perform in finite samples? Second, how well does the new model class perform in prediction tasks compared to other popular alternatives? Third, how robust is the estimation procedure towards model misspecification?

\subsection{Finite-sample behavior and parameter consistency} We begin by investigating the finite-sample properties of the estimators via a Monte Carlo study. In particular, we focus on assessing the consistency in finite samples by checking whether the estimators concentrate around the true parameter values as the observation horizon increases.

For simplicity, we consider a uniform time discretization. Following the framework of \cite[Section 4.4]{lucchese2026estimation}, we set the mesh sizes $\Delta_{\mathcal{P}_t} = t^{-6}$ and $\Delta_{\mathcal{Q}_t} = t^{-2}$, ensuring that the conditions of Theorem \ref{thm2} are satisfied. We consider time horizons $t = 2, 4, 8$ to illustrate convergence as $t$ increases.

We construct a network with three vertices and two undirected edges, incorporating two temporal lags, as shown below in Figure \ref{fig:edge-neighbors1}. Let $\alpha_{i,j}$ denote the ``standard" autoregressive parameter associated with edge $i$ at lag $j$. The true parameter matrix is given by
\begin{equation*}
    \alpha^* = \begin{bmatrix}
        4 & 2\\
        3 & 1\\
    \end{bmatrix} \text{, and } \beta^* = [1,1]^{\top},
\end{equation*}
where $\beta_i$ denotes the network power parameter at $i$.
\begin{figure}[htbp]
    \centering
    \includegraphics[scale = 1.2]{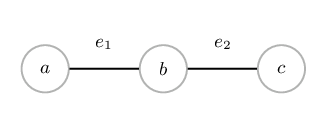}
    \caption{The undirected network with three vertices and two edges $e_1$ and $e_2$.}
    \label{fig:edge-neighbors1}
\end{figure}

The driving noise is a L\'evy process with a characteristic triplet $(\textbf{0},\textbf{I}_2,F)$. We consider three different jump regimes:
\begin{itemize}
    \item Pure diffusion, i.e., the process is driven by standard Brownian motion only.
    \item Compound Poisson process with common jump arrival times of intensity $\lambda =1$ and multivariate Gaussian jump sizes $\mathcal{N}(\textbf{0},\textbf{I}_2)$. 
    \item Independent symmetric Gamma-type process with shape $k=1$ and scale $\theta=1$.
\end{itemize}
We follow the exact procedure in \cite[Appendix G.2]{lucchese2026estimation} to simulate the first two regimes and use the approximate Euler–Maruyama scheme in \cite[Appendix G.1]{lucchese2026estimation} for the last one with infinitely many jumps.

In practice, the L\'evy triplet is unknown and must be estimated from data. We therefore implement the data-driven procedure described in \cite[Section 4.5]{lucchese2026estimation}. Due to limited high-frequency observations in our empirical dataset, the truncation threshold $\bm{\nu}_t$ mentioned in Theorem \ref{thm2} cannot be directly inferred from $\Delta \mathcal{Q}_t$. Instead, we adopt the method in \cite[Section H.1]{lucchese2026estimation} to disentangle the Brownian and jump components.

Figures \ref{sim_BM_fig}-\ref{sim_Gamma_fig} present the empirical distributions of the estimators based on $1000$ Monte Carlo replications for $t = 2, 4, 8$. Across all regimes, the estimators exhibit increasing concentration around the true parameters $\alpha^*$ and $\beta^*$ as $t$ grows, providing strong empirical support for the consistency results established in asymptotic theory. The model performs stably across all jump specifications. As is standard, small jumps are effectively absorbed into the continuous martingale component. In the remainder of the paper, we therefore focus on the compound Poisson setting.

\begin{figure}[htbp]
  \centering
  \includegraphics[scale = .6]{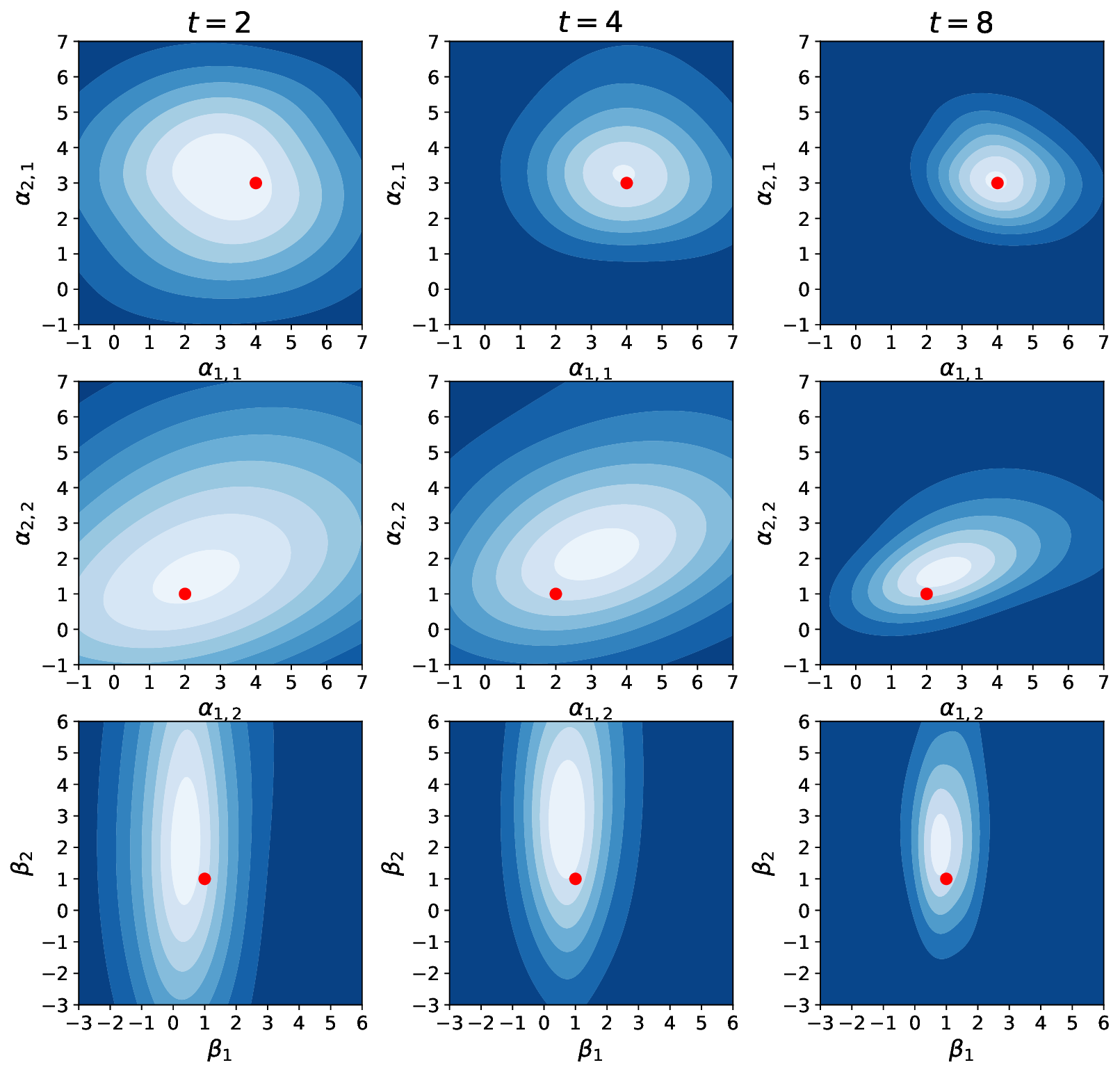}
  \caption{Empirical distributions of the ``standard'' autoregressive parameters and network power estimators under pure diffusion (Brownian motion) noise. Results are based on $1000$ Monte Carlo simulations for $t = 2, 4, 8$.}
  \label{sim_BM_fig}
\end{figure}

\begin{figure}[htbp]
  \centering
  \includegraphics[scale = .6]{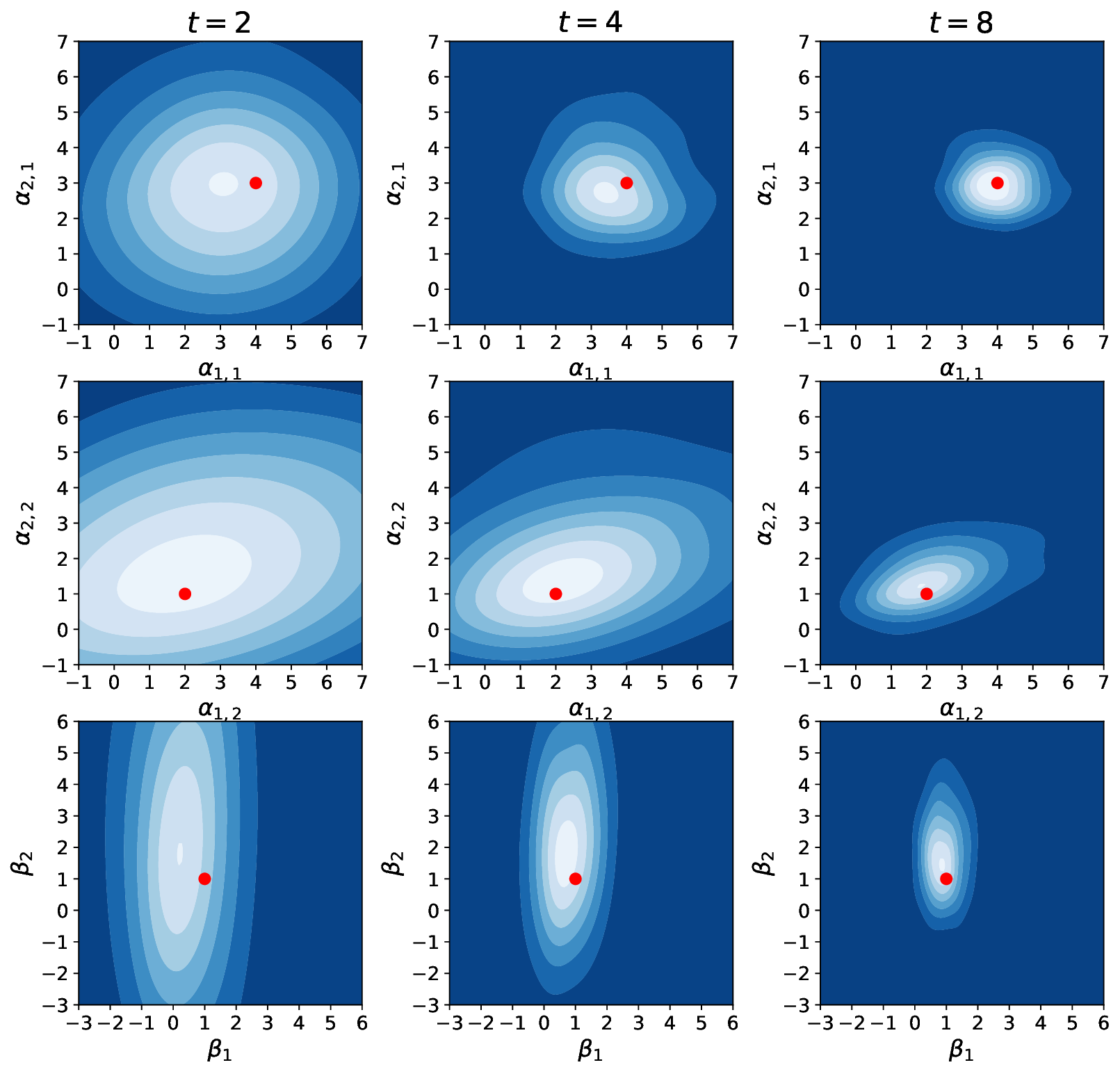}
  \caption{Empirical distributions of the ``standard'' autoregressive parameters and network power estimators under compound Poisson noise. Results are based on $1000$ Monte Carlo simulations for $t = 2, 4, 8$.}
  \label{sim_CP_fig}
\end{figure}

\begin{figure}[htbp]
  \centering
  \includegraphics[scale = .6]{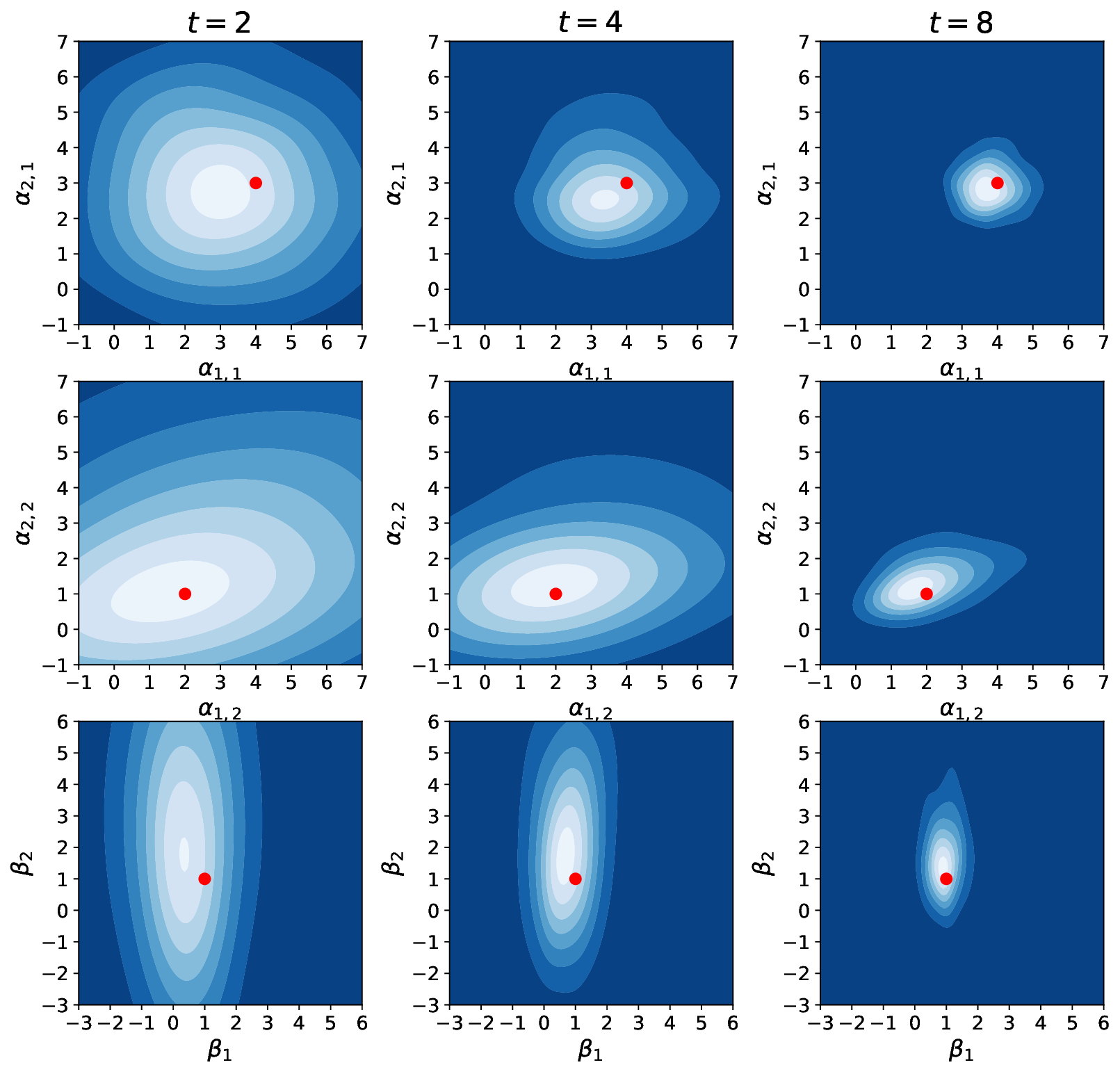}
  \caption{Empirical distributions of the ``standard'' autoregressive parameters and network power estimators under symmetric Gamma noise. Results are based on $1000$ Monte Carlo simulations for $t = 2, 4, 8$.}
  \label{sim_Gamma_fig}
\end{figure}

\subsection{Predictive performance}
\label{Predictive performance}
To assess predictive performance, we consider a complete graph with five vertices (ten edges), as shown in Figure \ref{fig:edge-neighbors2}. The ``standard'' autoregressive parameter is set to $\alpha = 5$ for one selected edge and $\alpha = 1$ for the remaining nine edges. In addition, we impose a strong network interaction effect of magnitude $2$ on each neighborhood stage for each lag.
\begin{figure}[htbp]
    \centering
    \includegraphics[scale = 1]{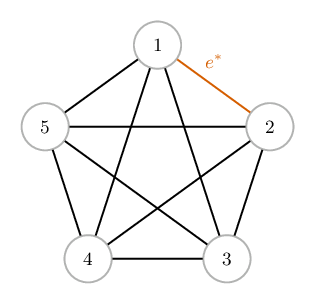}
    \caption{A complete graph on five vertices, in which a single distinguished edge $e^*$ is highlighted in color while all remaining edges are drawn uniformly.}
    \label{fig:edge-neighbors2}
\end{figure}

The system evolves according to a lag-one, first-neighborhood-stage grOU edge process driven by a compound Poisson process with intensity $\lambda = 1$ and multivariate Gaussian jump sizes $\mathcal{N}(\textbf{0},\sigma^2\textbf{I}_{10})$, where $\sigma^2 \in \{1, 5, 10\}$. This design introduces both substantial network dependence and increasing jump variability. A representative sample path for each level of jump-size variability is shown in Figure \ref{sample_path}.

\begin{figure}[htbp]
  \centering
  \includegraphics[scale = .35]{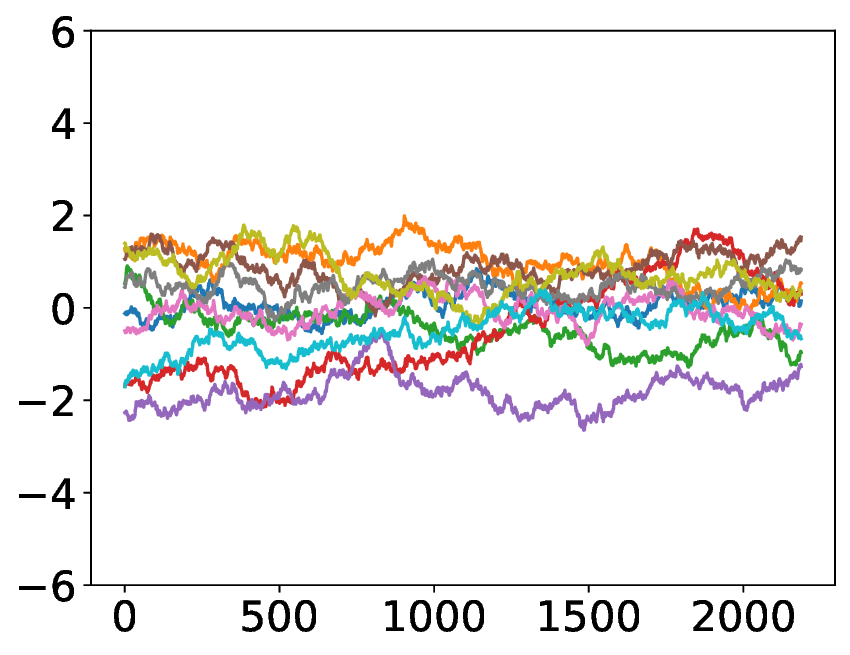}
  \includegraphics[scale = .35]{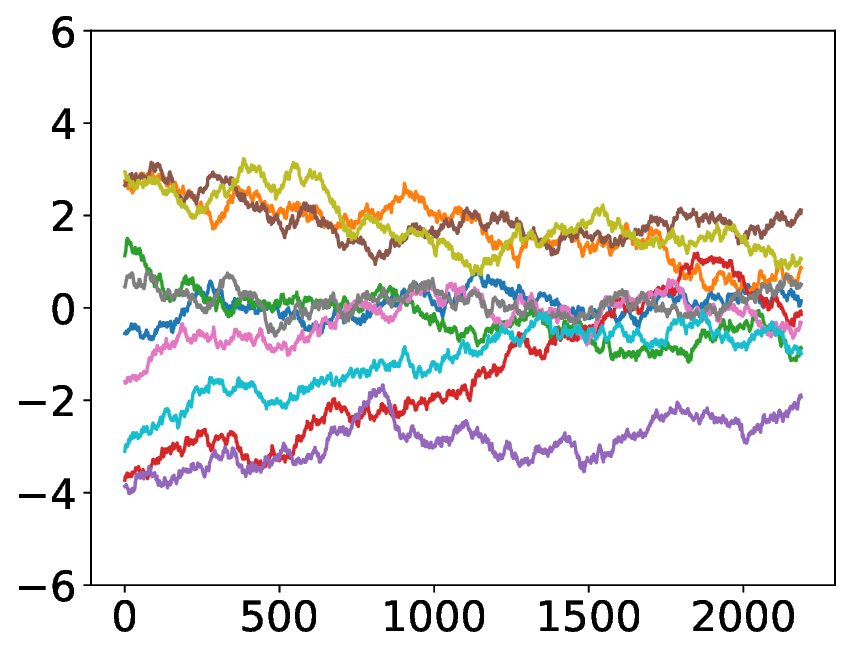}
  \includegraphics[scale = .35]{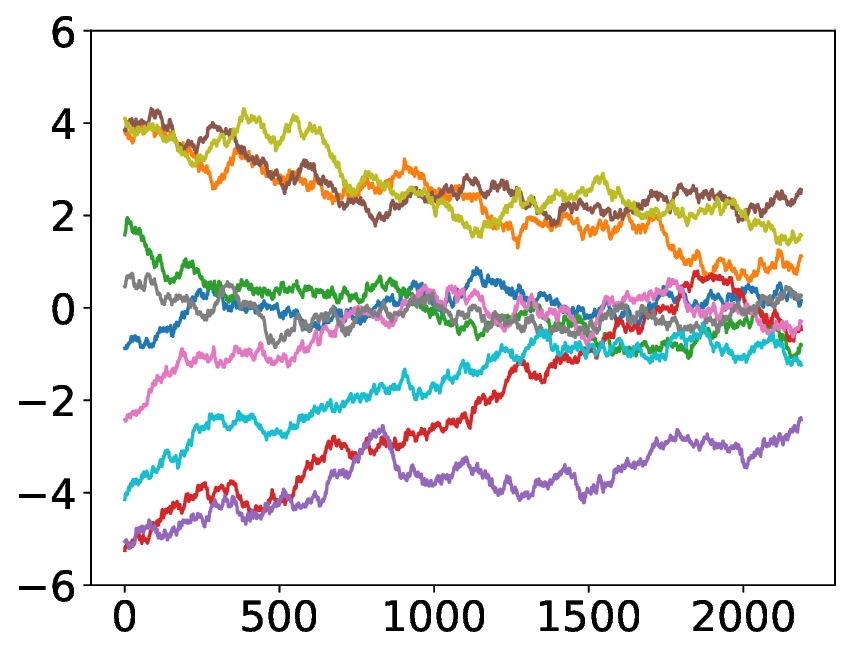}
  \caption{Sample paths of $10$ edges over the full observation period under jump-size variances $\sigma^2 = 1$ (left), $\sigma^2 = 5$ (middle), and $\sigma^2 = 10$ (right).}
  \label{sample_path}
\end{figure}

We make a one-step forecast and compare the grOU$(1, [1])$ model with the following benchmarks:
\begin{enumerate}
    \item\textbf{Naive model (NA):} We predict the next observation by the most recent observed value, that is, $\hat{\mathbf{Y}}_{t+1} = \mathbf{Y}_t$. This serves as a simple and widely used benchmark in financial applications. 
    \item \textbf{Univariate autoregressive edge model (AR):} Each edge is forecast independently using an AR(1) model. Assuming standard Gaussian noise, predictions are given by the conditional expectation based on the OLS estimator.
    \item \textbf{Vector autoregressive edge model (VAR):} All edges are jointly forecast under a VAR(1) model. Assuming multivariate standard Gaussian noise, predictions are obtained from the conditional expectation based on the OLS estimator.
    \item \textbf{Generalized network autoregressive edge model (GNAR):} All edges are forecast using a GNAR(1,[1]) edge model, following the methodology of \cite{mantziou2023gnar}.
    \item \textbf{Multivariate continuous autoregressive edge model (MCAR):} All edges are forecast using Proposition $\ref{prop:secondorder-cond}$, under a continuous-time autoregressive specification with one lag.
    \item \textbf{Univariate Ornstein-Uhlenbeck edge model (OU):} Forecasts are constructed using the same approach as in the MCAR model, but with the state-space $\mathbb{Q}$ restricted to have a diagonal matrix $\mathbf{Q}$, and with a single lag.
\end{enumerate}

Each simulated path contains $3^7 = 2187$ observations, of which the final $400$ are reserved for out-of-sample evaluation. All results are based on $1000$ independent simulations. The total number of observations is chosen to closely match that of the empirical dataset. We evaluate model performance using two metrics. The first is the root mean squared error (RMSE),
\begin{equation*}
    \mathbf{RMSE} = \sqrt{\frac{1}{TK}\sum_{t=1}^T\sum_{k=1}^K\left(\mathbf{Y}^{(k)}_t-\hat{\mathbf{Y}}^{(k)}_t\right)^2}.
\end{equation*}
While this widely used metric captures average squared deviations, it is sensitive to large outliers, which can cause it to behave similarly across models. To address this limitation, we also consider directional accuracy, i.e., the ability to correctly predict the sign of increments, defined as
\begin{equation*}
    \mathbf{DirAcc} = \frac{1}{TK}\sum_{t=1}^T\sum_{k=1}^K\mathbbm{1}_{sgn\left(\mathbf{Y}^{(k)}_t-\mathbf{Y}^{(k)}_{t-1}\right)=sgn\left(\hat{\mathbf{Y}}^{(k)}_t-\mathbf{Y}^{(k)}_{t-1}\right)},
\end{equation*}
which measures the ability to correctly predict the direction of changes. By convention, the naive model achieves a baseline value of $0.5$. This metric provides a more robust, qualitative assessment in the presence of jumps.

Finally, we report computational time, as efficiency is a key practical consideration, particularly in high-frequency settings with large sample sizes and frequent updates. In our framework, the network dimension remains moderate relative to the number of observations.

The orange line in each boxplot represents the median, while the half-violin displays the $5$th to $95$th percentiles of the distribution. As shown in Figures \ref{scale_1_beta_2_rate_1_box}-\ref{scale_10_beta_2_rate_1_box}, the RMSE boxplots do not provide a clear basis for model discrimination, as performance appears broadly similar across models due to the presence of large jump outliers. 

\begin{figure}[htbp]
  \centering
  \includegraphics[scale = .4]{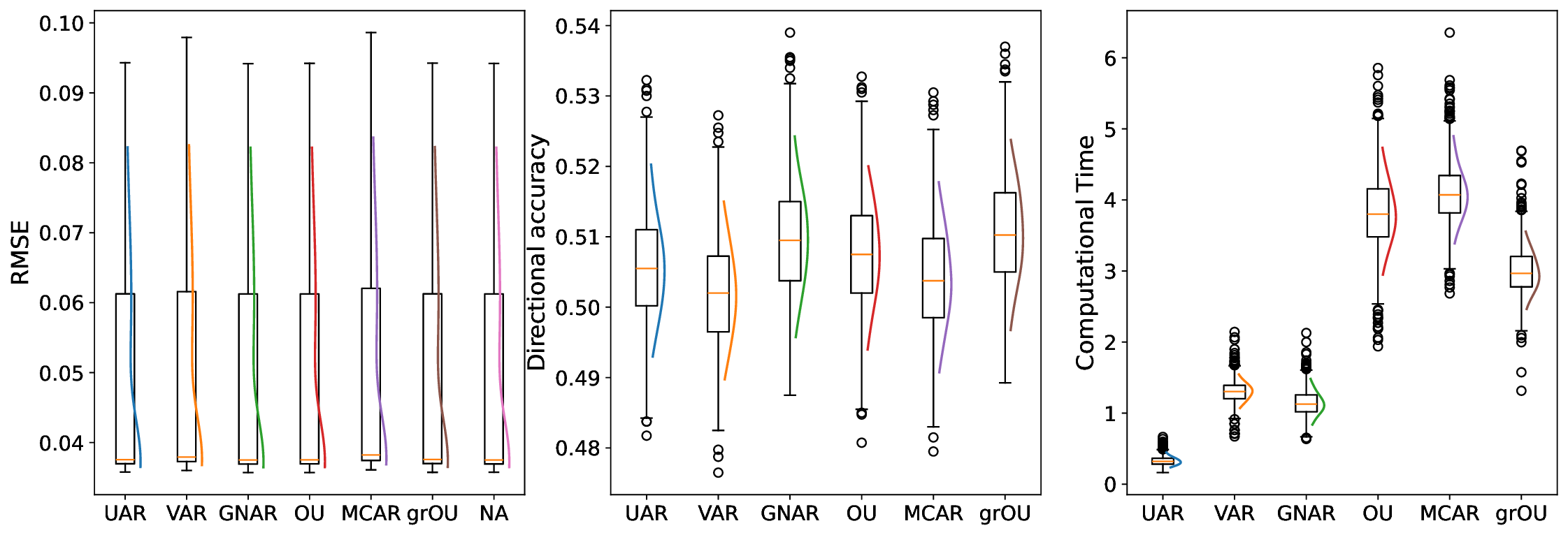}
  \caption{Out-of-sample predictive performance under compound Poisson noise with $\sigma^2 = 1$. Boxplots report RMSE (left), directional accuracy (middle), and computation time (right) across models over 1000 simulated paths.}
  \label{scale_1_beta_2_rate_1_box}
\end{figure}

\begin{figure}[htbp]
  \centering
  \includegraphics[scale = .4]{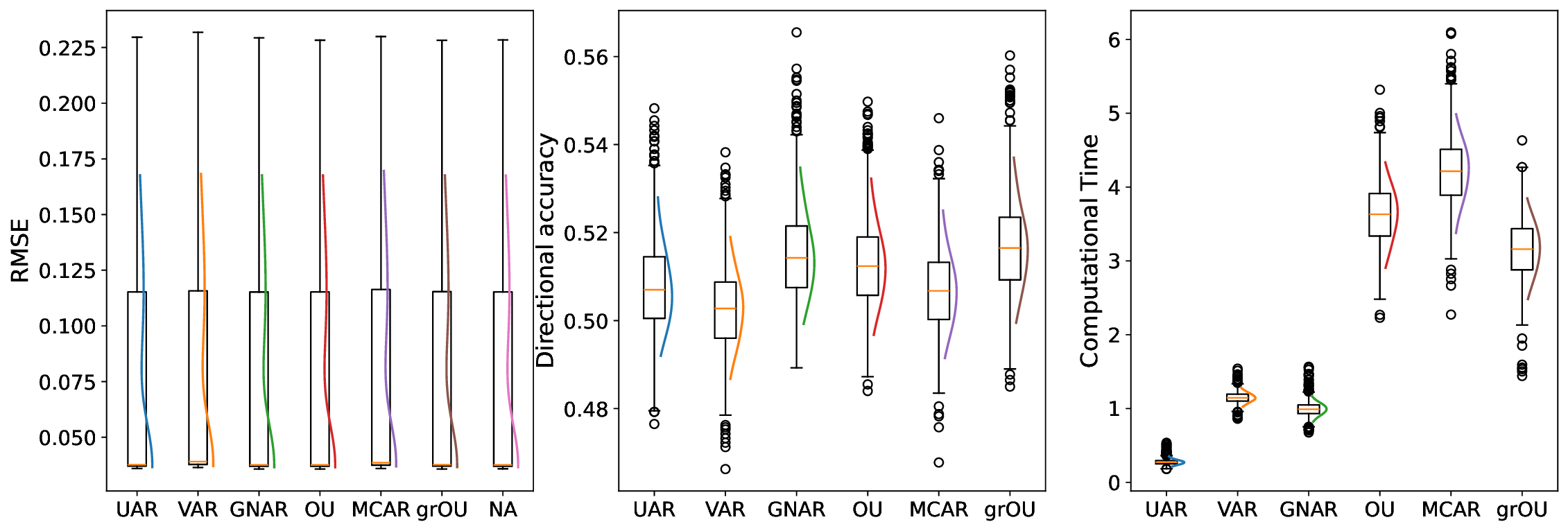}
  \caption{Out-of-sample predictive performance under compound Poisson noise with $\sigma^2 = 5$. Boxplots report RMSE (left), directional accuracy (middle), and computation time (right) across models over 1000 simulated paths.}
  \label{scale_5_beta_2_rate_1_box}
\end{figure}

\begin{figure}[htbp]
  \centering
  \includegraphics[scale = .4]{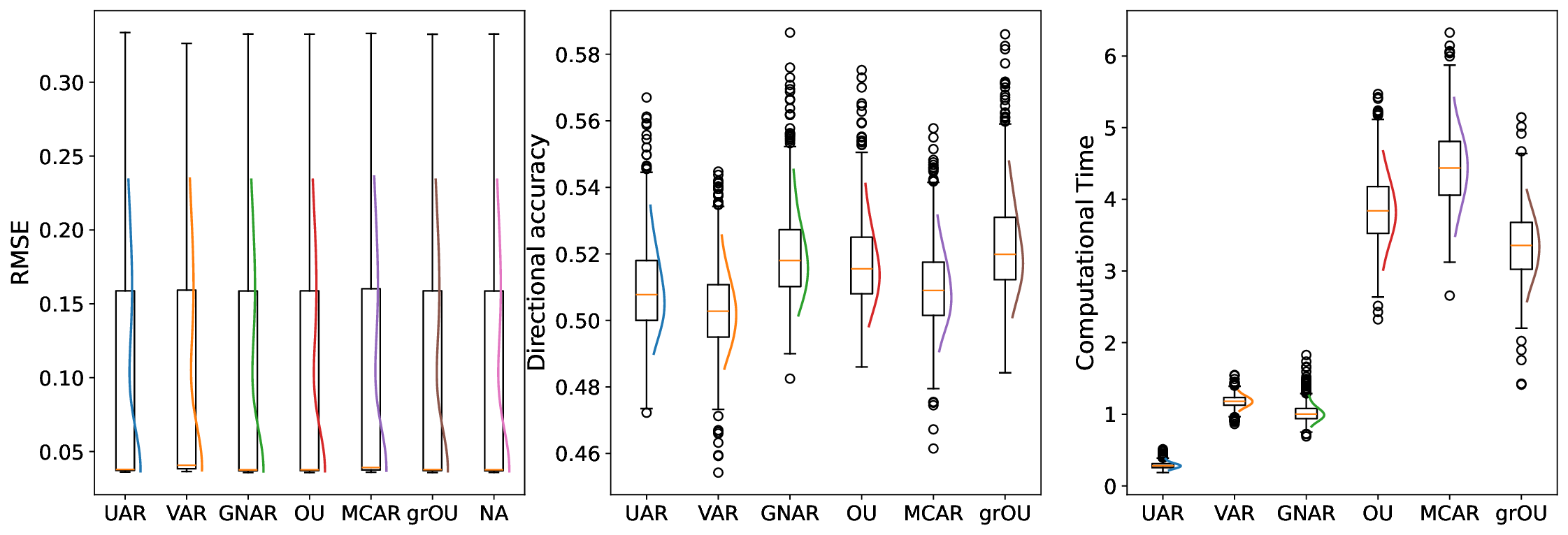}
  \caption{Out-of-sample predictive performance under compound Poisson noise with $\sigma^2 = 10$. Boxplots report RMSE (left), directional accuracy (middle), and computation time (right) across models over 1000 simulated paths.}
  \label{scale_10_beta_2_rate_1_box}
\end{figure}

In contrast, the grOU model outperforms the other benchmark models in terms of directional accuracy, which is consistent with the presence of strong network dependence and jump dynamics. Although GNAR and grOU achieve comparable directional accuracy with lower computational costs, this advantage mainly comes from a refined and well-optimized implementation. With a more refined and optimized algorithm for the continuous-time models, we conjecture that the computational costs could be reduced to the level of the discrete-time versions.

Indeed, a key advantage of the grOU specification lies in its time-discretization scheme: the integral terms are evaluated on a coarse grid, $\Delta_{\mathcal{Q}_t}$, which is coarser than the observation grid, $\Delta_{\mathcal{P}_t}$. This reduces the number of required evaluations and can significantly lower the computational cost, provided the implementation is sufficiently optimized relative to the GNAR model, which operates on the full grid.

Although the VAR model is currently faster than the grOU model in this setting, its computational cost increases quadratically with dimension. Consequently, as the number of edges grows, the VAR model is expected to become less scalable than the grOU model. At the same time, despite its richer multivariate structure, VAR tends to be less competitive in terms of directional accuracy in the presence of potential jump behavior.

By contrast, the simpler AR model requires less computation than the grOU model, but it does not account for cross-edge dependence and therefore ignores potentially important network information. This limitation reduces its directional predictive accuracy. Moreover, as a discrete-time model, it does not accommodate irregularly spaced data as naturally as the grOU model.

Among the continuous-time models implemented within the same code framework, recall that $K$ denotes the number of edges, $L$ the number of lags, and $R_l$ the neighborhood size at lag $l$. The parameter dimensions are $LK$ for the OU model, $LK + \sum_{l=1}^L R_l$ for the grOU model, and $LK^2$ for the MCAR model. 

Although the grOU edge model is more complex than the OU model, it exhibits better computational performance since its flexible network structure leads to faster and more stable convergence under the estimation procedure of \cite[Section $\text{H}.1$]{lucchese2026estimation}. In addition, to avoid singularity issues, we include a ridge regularization term in both the OU and the grOU parameter estimation procedures, see the supplementary code for details. At the same time, the grOU model remains substantially lower-dimensional than the MCAR model, yielding important computational advantages.

In particular, the grOU model attains a higher directional accuracy advantage as jump variability increases. Greater variability in jump size makes jump behavior easier to detect and distinguish from the Brownian component. As a result, the model is more likely to identify the correct direction, which is driven primarily by the continuous, non-jump component.

\subsection{Robustness to model misspecification}
To examine robustness, we introduce network misspecification by omitting certain edges during model estimation.

First, we remove an edge with $\alpha = 1$. Figures \ref{scale_1_beta_2_rate_1_box_miss_1}-\ref{scale_10_beta_2_rate_1_box_miss_1} demonstrate that predictive performance remains largely unaffected. This suggests that the model is robust to moderate misspecification involving weakly persistent edges. If we remove the edge associated with the dominant autoregressive parameter $\alpha = 5$ instead, as shown in Figures \ref{scale_1_beta_2_rate_1_box_miss_5}-\ref{scale_10_beta_2_rate_1_box_miss_5}, this results in only a slight decrease in directional accuracy.

\begin{figure}[htbp]
  \centering
  \includegraphics[scale = .4]{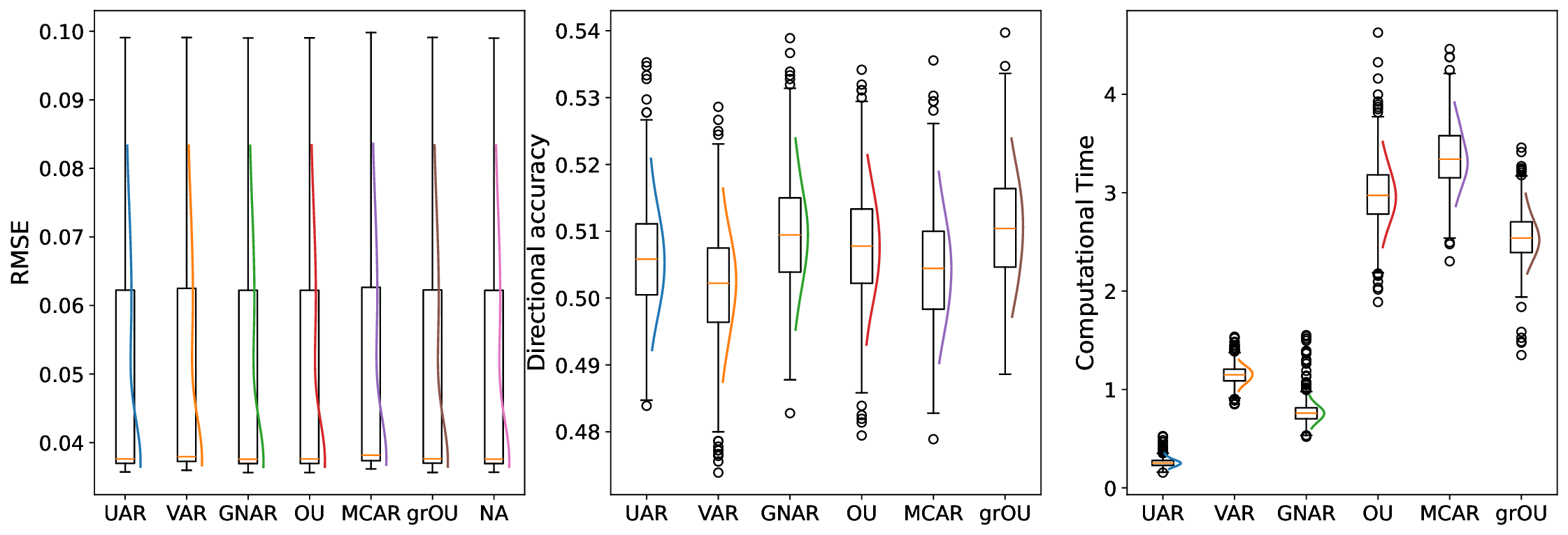}
  \caption{Predictive performance when a weakly persistent edge ($\alpha = 1$) is omitted during estimation ($\sigma^2 = 1$). Boxplots report RMSE (left), directional accuracy (middle), and computation time (right) over 1000 simulations.}
\label{scale_1_beta_2_rate_1_box_miss_1}
\end{figure}

\begin{figure}[htbp]
  \centering
  \includegraphics[scale = .4]{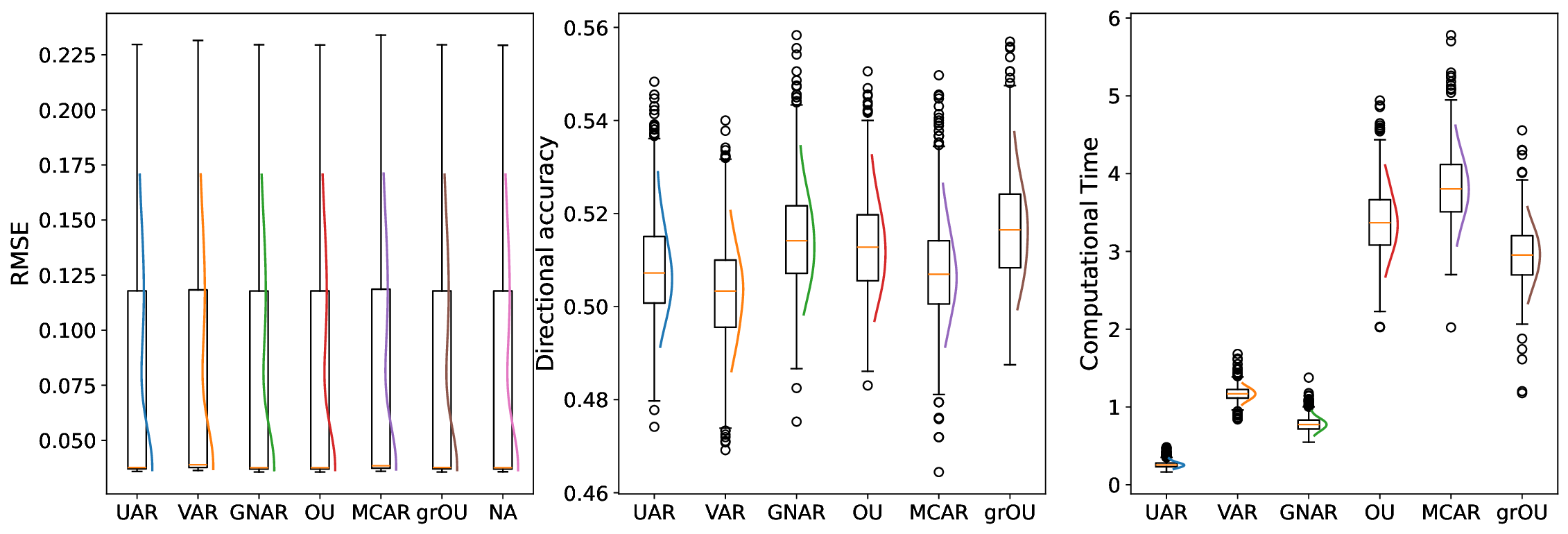}
  \caption{Predictive performance when a weakly persistent edge ($\alpha = 1$) is omitted during estimation ($\sigma^2 = 5$). Boxplots report RMSE (left), directional accuracy (middle), and computation time (right) over 1000 simulations.}
\label{scale_5_beta_2_rate_1_box_miss_1}
\end{figure}

\begin{figure}[htbp]
  \centering
  \includegraphics[scale = .4]{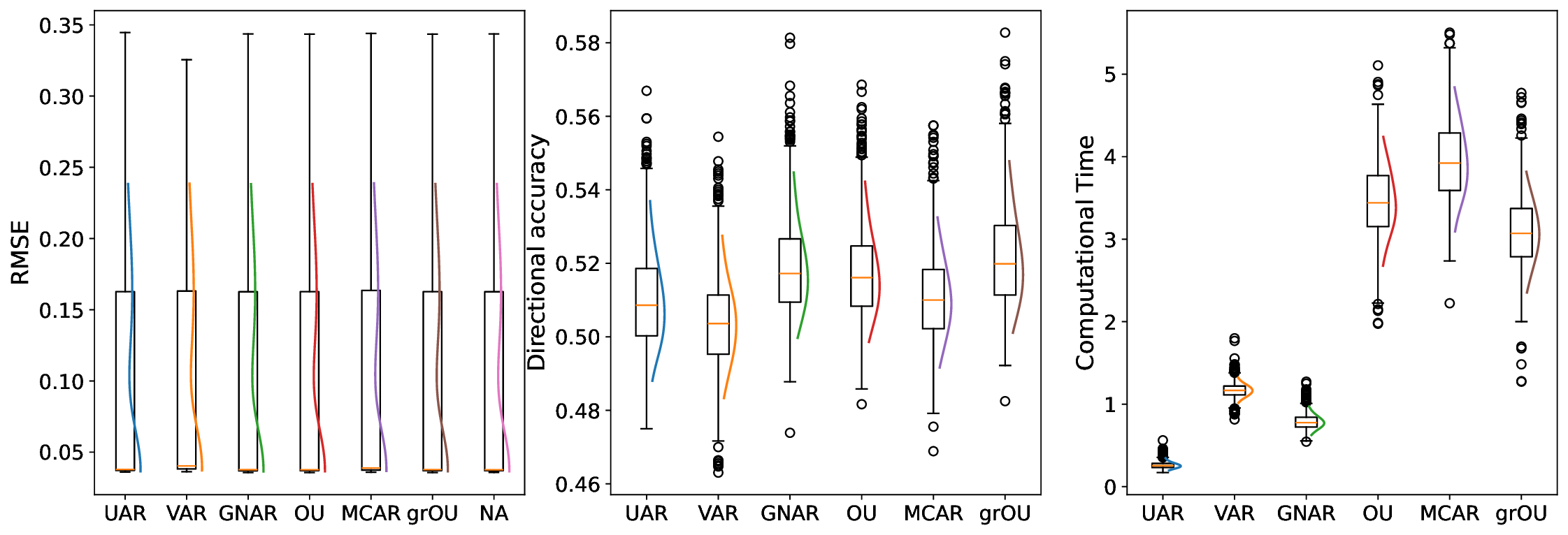}
  \caption{Predictive performance when a weakly persistent edge ($\alpha = 1$) is omitted during estimation ($\sigma^2 = 10$). Boxplots report RMSE (left), directional accuracy (middle), and computation time (right) over 1000 simulations.}
\label{scale_10_beta_2_rate_1_box_miss_1}
\end{figure}

\begin{figure}[htbp]
  \centering
  \includegraphics[scale = .4]{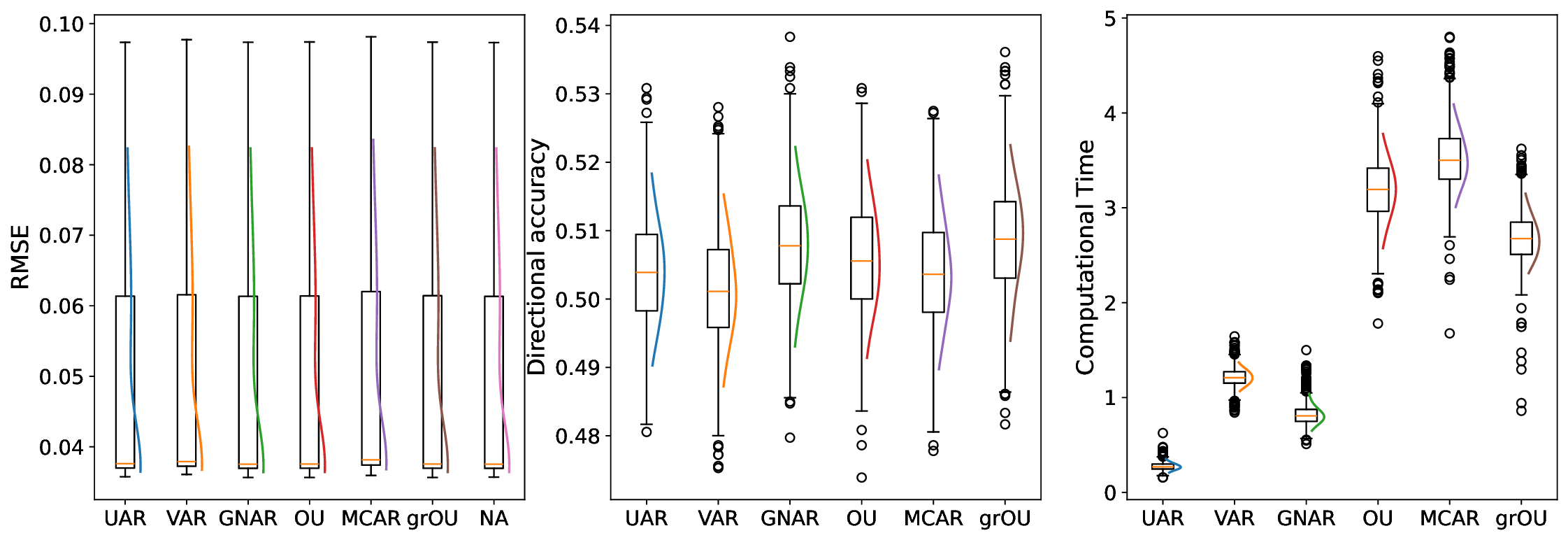}
  \caption{Predictive performance when the dominant edge ($\alpha = 5$) is omitted during estimation ($\sigma^2 = 1$). Boxplots report RMSE (left), directional accuracy (middle), and computation time (right) over 1000 simulations.}
  \label{scale_1_beta_2_rate_1_box_miss_5}
\end{figure}

\begin{figure}[htbp]
  \centering
  \includegraphics[scale = .4]{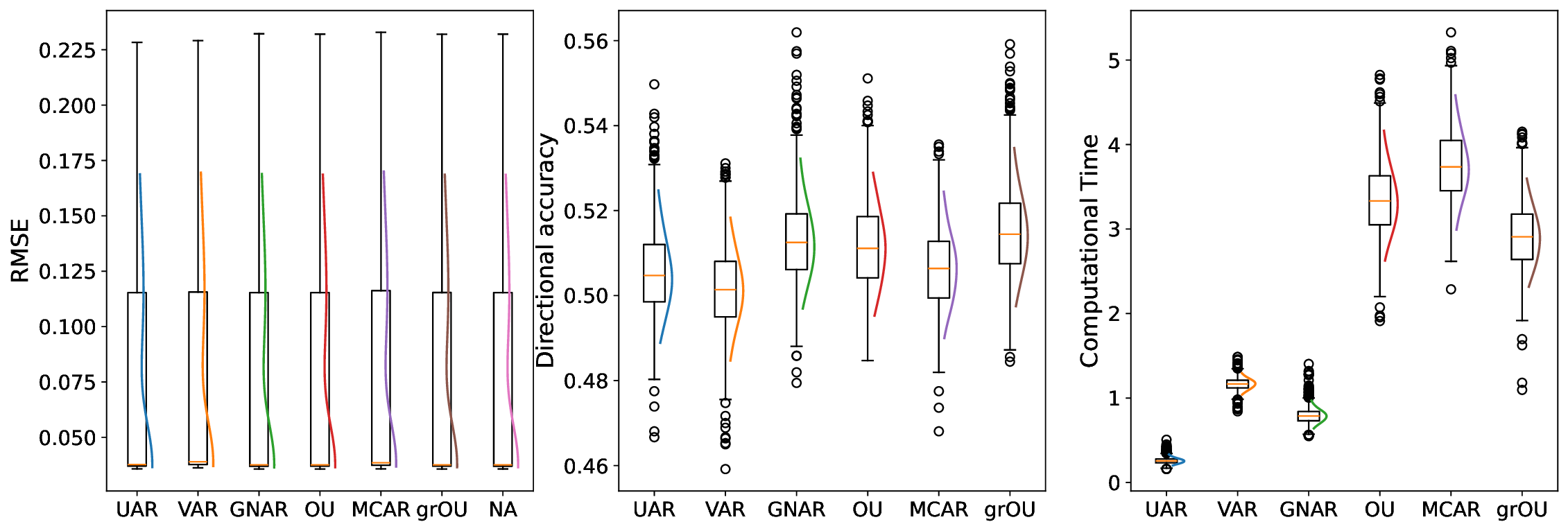}
  \caption{Predictive performance when the dominant edge ($\alpha = 5$) is omitted during estimation ($\sigma^2 = 5$). Boxplots report RMSE (left), directional accuracy (middle), and computation time (right) over 1000 simulations.}
  \label{scale_5_beta_2_rate_1_box_miss_5}
\end{figure}

\begin{figure}[htbp]
  \centering
  \includegraphics[scale = .4]{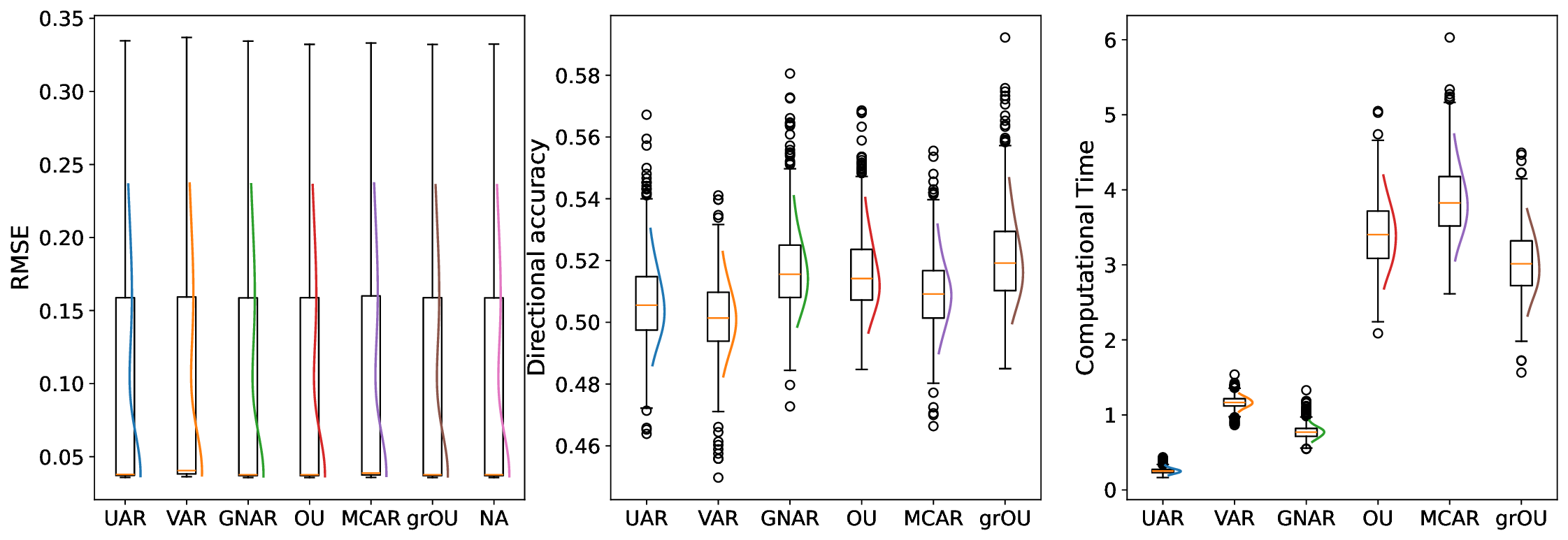}
  \caption{Predictive performance when the dominant edge ($\alpha = 5$) is omitted during estimation ($\sigma^2 = 10$). Boxplots report RMSE (left), directional accuracy (middle), and computation time (right) over 1000 simulations.}
  \label{scale_10_beta_2_rate_1_box_miss_5}
\end{figure}

Overall, the results for the missing-edge scenarios demonstrate that the grOU model is robust to moderate network misspecification, and this pattern holds uniformly across $\sigma^2 = 1, 5, 10$. Whether the omitted edge corresponds to the stronger connection $(\alpha = 5)$ or the weaker connection $(\alpha = 1)$, the changes in RMSE and directional accuracy are relatively small. This suggests that the predictive gains from incorporating network information are not overly sensitive to small structural errors in the network. These findings provide evidence that grOU is not only accurate under correct specification but also robust when the network is partially misspecified, as shown in Figures \ref{boxplot_grOU_1}-\ref{boxplot_grOU_10}. Appendix \ref{Simulation data table} provides summaries of predictive accuracy and computational cost, confirming that the proposed model achieves a favorable balance between statistical performance and efficiency.

\begin{figure}[htbp]
  \centering
  \includegraphics[scale = .5]{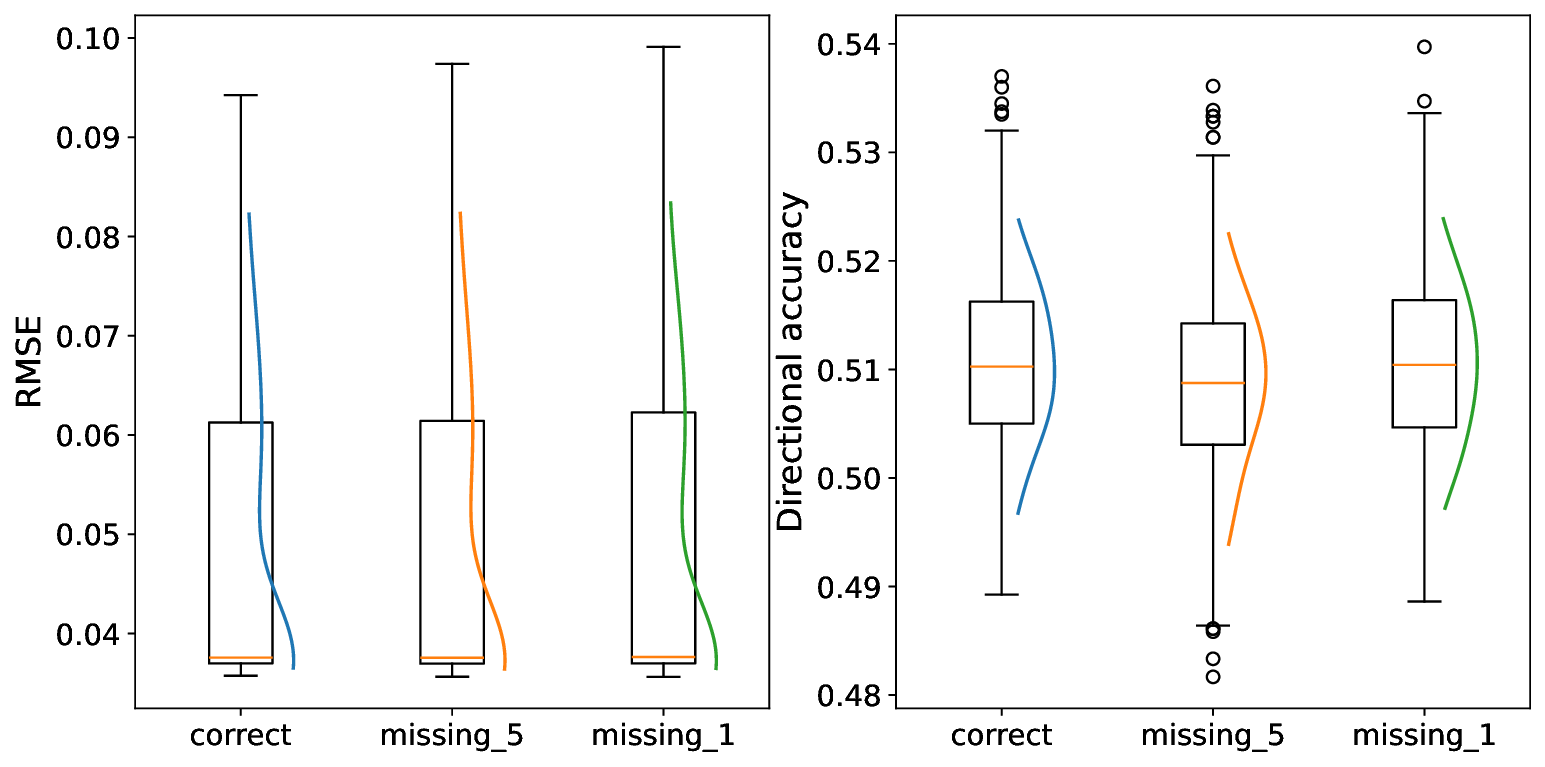}
  \caption{Predictive performance across all three scenarios ($\sigma^2 = 1$). Boxplots report RMSE (left) and directional accuracy (right) over 1000 simulations.}
  \label{boxplot_grOU_1}
\end{figure}

\begin{figure}[htbp]
  \centering
  \includegraphics[scale = .5]{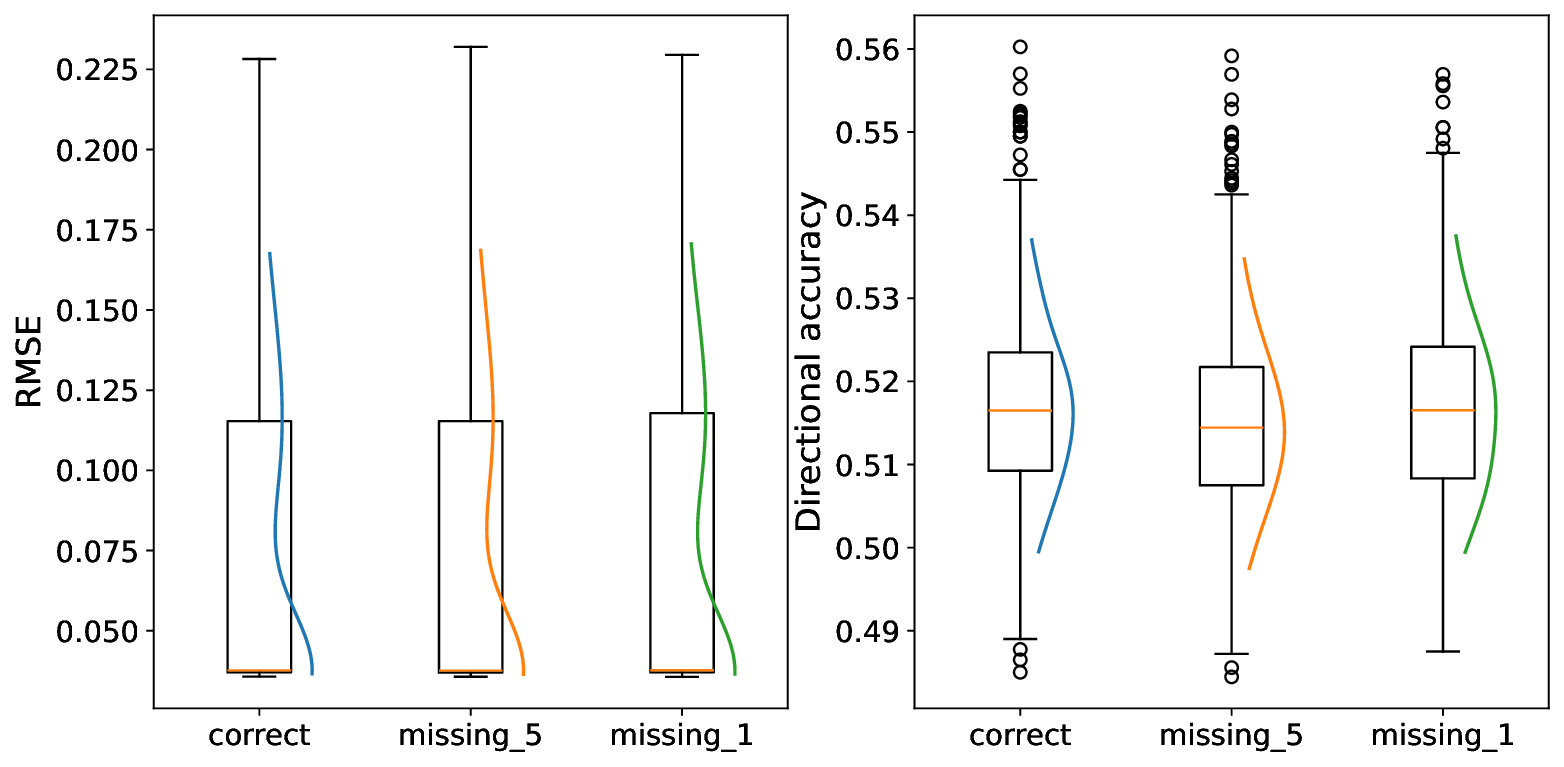}
  \caption{Predictive performance across all three scenarios ($\sigma^2 = 5$). Boxplots report RMSE (left) and directional accuracy (right) over 1000 simulations.}
  \label{boxplot_grOU_5}
\end{figure}

\begin{figure}[htbp]
  \centering
  \includegraphics[scale = .5]{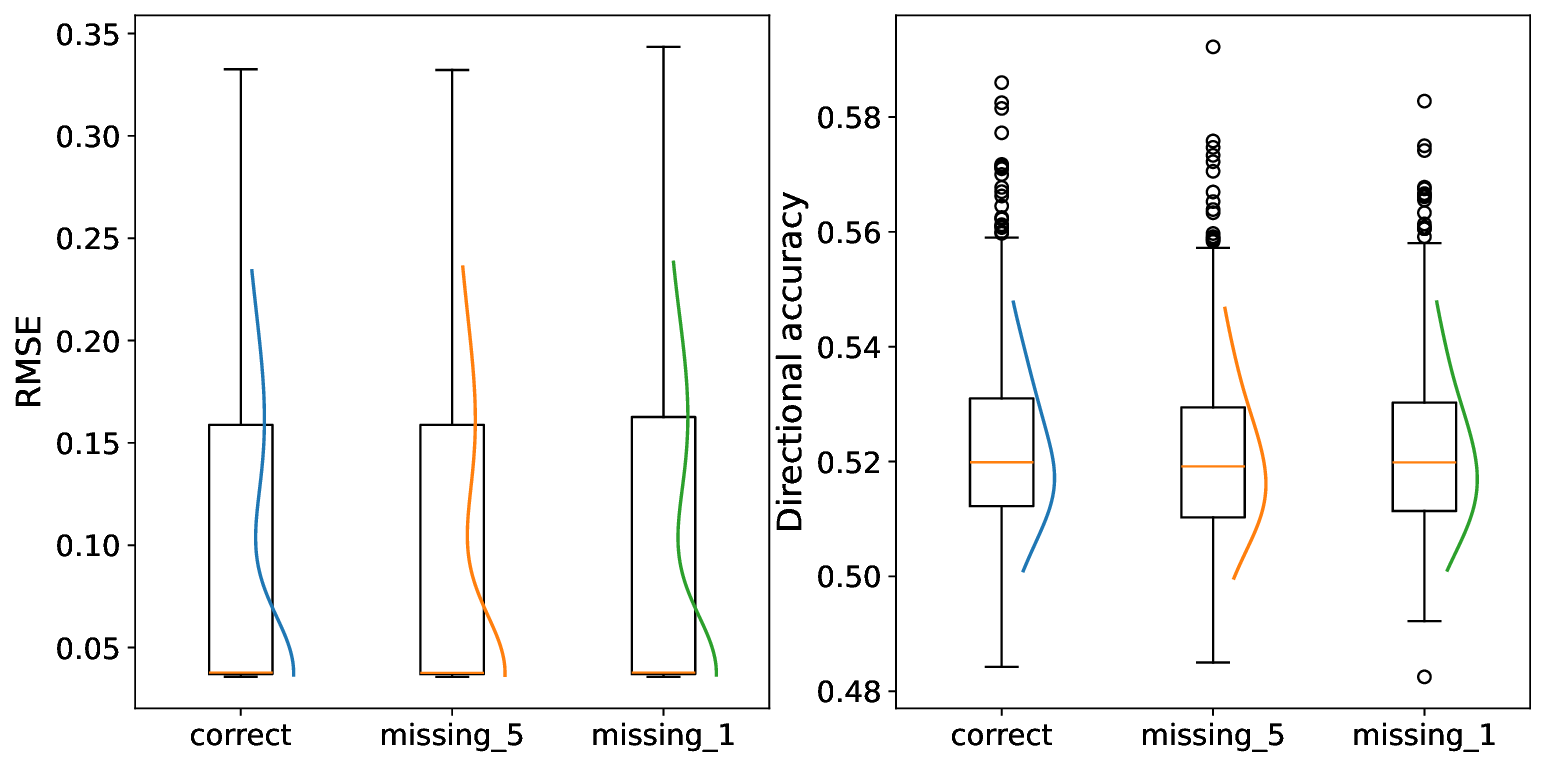}
  \caption{Predictive performance across all three scenarios ($\sigma^2 = 10$). Boxplots report RMSE (left) and directional accuracy (right) over 1000 simulations.}
  \label{boxplot_grOU_10}
\end{figure}

\section{Empirical illustration}
\label{Empirical illustration}
To illustrate the empirical properties of the grOU edge process developed above, we retrieve high-frequency limit order book data for a variety of assets from the LOBSTER database. It is well documented that more frequently traded equities typically exhibit lower levels of microstructure noise. Accordingly, we select equities from the S\&P 500 and NASDAQ constituents as of January 1, 2023. In addition, we include SPDR S\&P 500 ETF Trust (ticker: SPY) and NDAQ (Nasdaq, Inc.). The remaining equities are drawn from multiple sectors to ensure economic heterogeneity while preserving potential structural connections. Specifically, we include:
\begin{itemize}
    \item Financial: JPM (JPMorgan Chase \& Co.), GS (Goldman Sachs Group, Inc.)
\item Consumer Staples: KO (The Coca-Cola Company), MNST (Monster Beverage Corporation)
\item Technology: MSFT (Microsoft Corporation), AMZN (Amazon.com, Inc.)
\end{itemize}
This selection yields an undirected graph with eight vertices. Economic and index-based relationships among these assets naturally induce potential edges, forming a moderately sized network suitable for empirical analysis. Importantly, we do not impose a fixed static network structure. 

Following the methodology from \cite{lucchese2024short}, we preprocess the LOBSTER data into synchronized logarithmic mid-quote series sampled at one-second frequency using the refresh-time sampling scheme to address asynchronicity across assets. Within each interval, we record the last observed mid-quote price. Several cleaning procedures are implemented to mitigate microstructure noise and non-stationarity: Elimination of crossed quotes; compression of multi-record conceptual events (e.g., limit order modifications represented as cancellation followed by immediate submission; market orders matched against multiple resting orders) into single timestamps; exclusion of observations outside regular market hours; removal of the first and last 60 minutes of trading to reduce opening and closing effects.

Standard realized covariance estimators are known to suffer from the Epps effect (\cite{epps1979comovements}) at high sampling frequencies. To mitigate asynchronicity, we first synchronize the logarithmic mid-quote data using refresh time sampling. We then apply a pre-averaging procedure and compute hourly covariance via the Modulated Realized Covariance (MRC) estimator introduced by \cite{christensen2010pre}. This approach is designed to reduce the impact of market microstructure noise while preserving high-frequency information. A detailed justification for the choice of the MRC estimator, along with a description of the implementation, is provided in Appendix \ref{mrc}.

We use both message and quote data over the sample period from 2023-01-01 to 2026-02-01. The data are divided into a training set (2023-01-01 to 2025-06-30) and a testing set (2025-07-01 to 2026-02-01). Figure \ref{corr_cov} reports hourly pre-averaged correlation and covariance between SPY and NDAQ from 2023-01-01 to 2025-07-01. The correlation exhibits few abrupt shifts, whereas the covariance displays several jumps over time, including a cluster of large moves around April 2025. Motivated by this behavior, Section \ref{Simulation study} evaluates predictive performance and robustness under different jump regimes to better reflect empirical variability.

\begin{figure}[htbp]
  \centering
  \includegraphics[scale = .43]{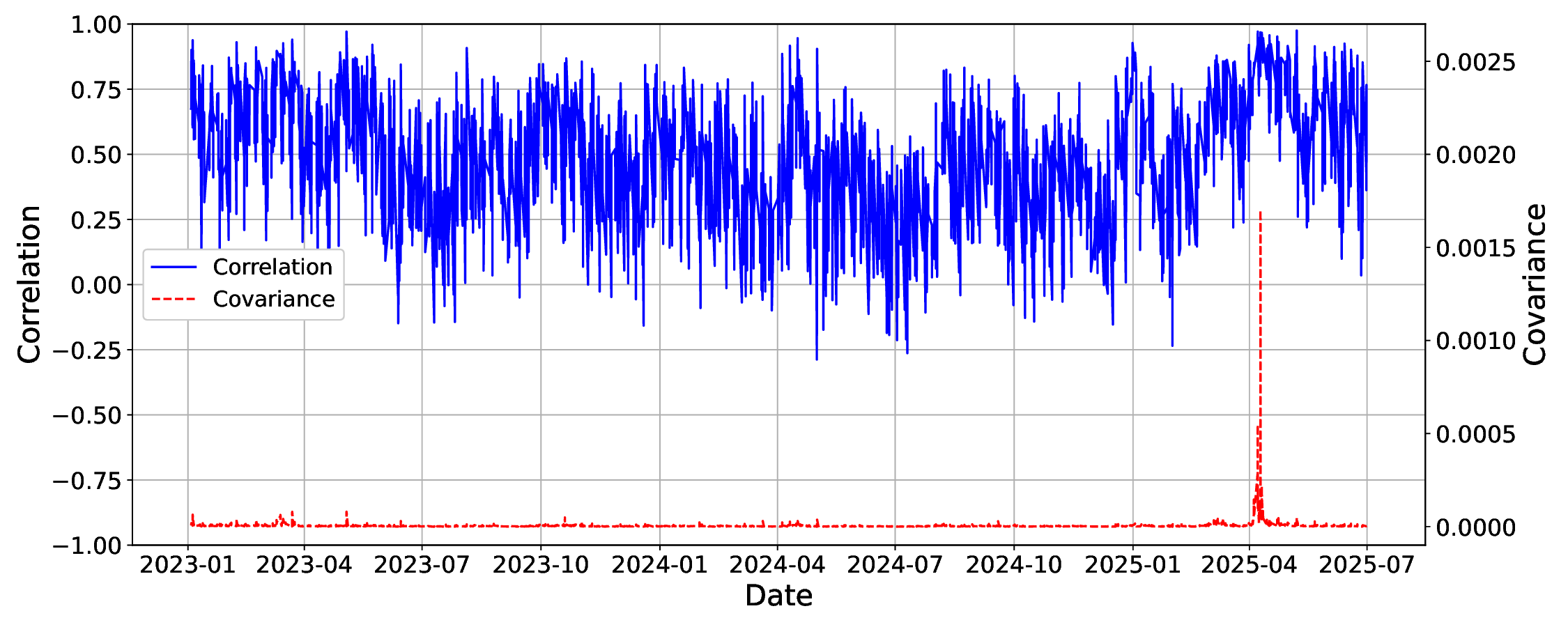}
  \caption{Hourly correlation (blue) and covariance (red) between SPY and NDAQ based on one-second mid-quote sampling from 2023-01-01 to 2025-07-01.}
  \label{corr_cov}
\end{figure}

\subsection{Initial value determination}
In the discrete-time framework with $p$ lags, prediction is based on the most recent $p$ observations, which serve as the required initial conditions. In contrast, the continuous-time counterpart does not directly provide the last $p-1$ time derivatives; only the most recent observation, corresponding to the $0$- th derivative, is observed.  

To bridge this gap, we initialize all unobserved state components at zero and retain only the observed component corresponding to the $0$-th derivative. This provides a simple and stable initialization strategy, which imposes minimal prior assumptions when information about higher-order components is unavailable, thereby reducing the risk of misleading predictions. By doing so, the model produces a locally flat forecast at the outset, but improves comparability across model specifications and provides a baseline for empirical implementation.

The effective sample size $N$ is determined by the grid ratio between $\mathcal{P}_t$ and $\mathcal{Q}_t$. As shown in Theorem \ref{thm2}, time derivatives are computed via finite differences on a fine grid $\mathcal{P}_t$, while stochastic integrals are evaluated on a coarser grid $\mathcal{Q}_t$. Finite differences amplify high-frequency noise yet incur limited bias, motivating a fine grid $\mathcal{P}_t$, whereas It\^o integrals on overly fine meshes suffer variance inflation (noise-dominated increments, unstable jump filtering), so coarser $\mathcal{Q}_t$ yields more stable estimates. Consequently, consistency requires the derivative error to vanish faster, which leads to a nesting condition in which $\mathcal{P}_t$ refines faster than $\mathcal{Q}_t$ as imposed in Appendix \ref{discrete}. In practice, the estimator may drift as $\mathcal{Q}_t$ varies due to microstructure noise and model misspecification. We therefore adopt a relatively large $\Delta_{\mathcal{P}_t}=0.01$ and a grid ratio between $\Delta_{\mathcal{P}_t}$ and $\Delta_{\mathcal{Q}_t}$ of $18$, a pragmatic compromise that improves numerical stability and robustness at a modest cost to theoretical optimality.

We structure the empirical illustration into two cases: one incorporating a network and one without. We begin with the network-based specification.
\subsection{Model with network structure}
\label{With a network}
Suppose we are interested in predicting covariance along specific edges among a set of assets connected through a given network. Edge data are extracted from the network to construct a vector time series. For model selection, we primarily choose the model with the strongest predictive performance in terms of directional accuracy. For high-frequency data, extending the neighborhood order beyond immediate neighbors increases the risk of overfitting due to strong edge interactions. Accordingly, we recommend $R_l=1$ for each lag $l$ as a default. If tuning is necessary, we restrict the search to $R_l \leq 2$. Among the grOU specifications, we select the model over all lag orders $L=1,2,3$, where the maximum neighborhood stage is uniformly set to either $1$ or $2$ across all lags.

If two models exhibit similar out-of-sample performance within a tolerance of $10^{-2}$, we can then use the Bayesian Information Criterion (BIC), as in the GNAR node-based framework from \cite{knight2020generalized}:
\begin{equation}
\mathbf{BIC} = -\hat{\theta}^\top\left[\mathbf{K}\right]\hat{\theta}+\left(LK+\sum_{l=1}^LR_l\right)\log(N), 
\end{equation}
where $\hat{\theta}^\top\!\left[\mathbf{K}\right]\hat{\theta}$ is the log-likelihood from \eqref{likelihoodquad}, $LK+\sum_{l=1}^L R_l$ is the total number of drift parameters in the grOU edge model, and $N$ is the number of coarse-grid increments used in the stochastic integral approximation (not the raw number of timestamps).

Beyond BIC-based selection, \cite{10.1093/jrsssa/qnaf182} proposed a lag selection method based on corbit plots for discrete-time models; however, it remains an open problem whether an analogous approach can be extended to the continuous-time setting.

As an illustration, we consider the network in Figure \ref{network}.
\begin{figure}[htbp]
  \centering
  \includegraphics[scale =.4]{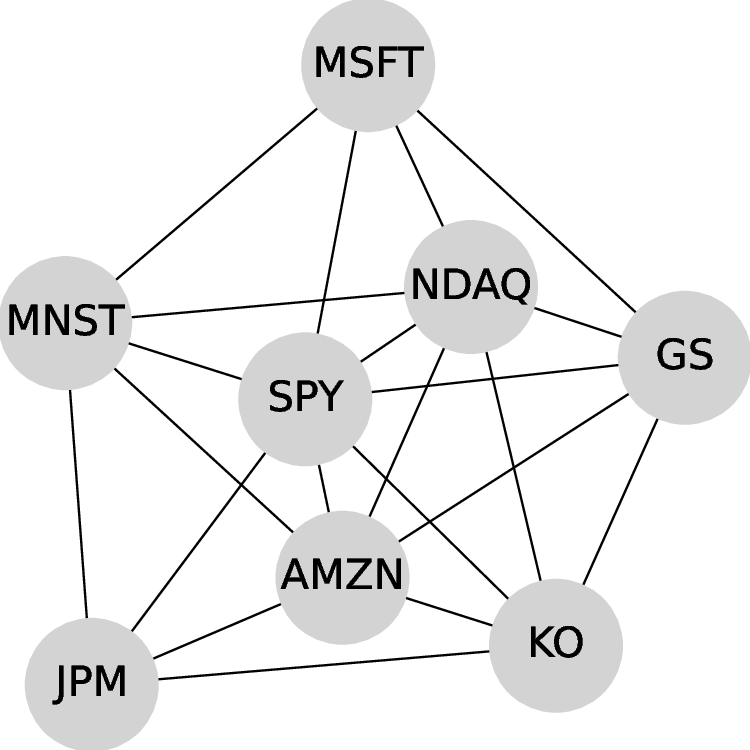}
  \caption{A sample network on eight assets.}
  \label{network}
\end{figure}
We compute the one-step forecasting directional accuracy (as in Section \ref{Simulation study}) for all competing models defined in Section \ref{Predictive performance}, as well as for additional grOU specifications with higher lag orders and larger neighborhood stages. The results, presented in Table \ref{table_pred1}, suggest that the grOU(1,[1]) model is the strongest choice when directional prediction is the main priority. They also indicate that increasing the order of the grOU model does not improve predictive performance in this setting.

Although predictive performance is our primary criterion, the difference in predictive performance between grOU(1,[1]) and grOU(1,[2]) falls within the tolerance threshold of $10^{-2}$. The BIC results shown in Figure \ref{bic} indicate that grOU(1,[2]) is the optimal specification for this network.

\begin{table}[htbp]
\centering
\begin{tabular}{|c|cc|}
\hline
Model & RMSE($10^{-6}$) & DirAcc\\ 
\hline
NA & 7.2826 & 0.5\\
AR & 6.4733 & 0.5871\\
VAR & 13.3647 & 0.5736\\
GNAR & 6.6976 & 0.6487\\
OU & 7.2133 & 0.6617\\
MCAR & 7.1677 & 0.6078\\
grOU(1,[1]) & 7.2175 & 0.6717\\
grOU(1,[2]) & 7.2180 & 0.6687\\
grOU(2,[1,1])& 7.2837 & 0.5014\\
grOU(2,[2,2])& 7.2840 & 0.4986\\
grOU(3,[1,1,1])& 7.2804 & 0.5186\\
grOU(3,[2,2,2])& 7.2804 & 0.5234\\
\hline
\end{tabular}
\caption{The one-step forecast results for competing models under the network in Figure \ref{network}.}
\label{table_pred1}
\end{table}

\begin{figure}[htbp]
  \centering
  \includegraphics[scale =.5]{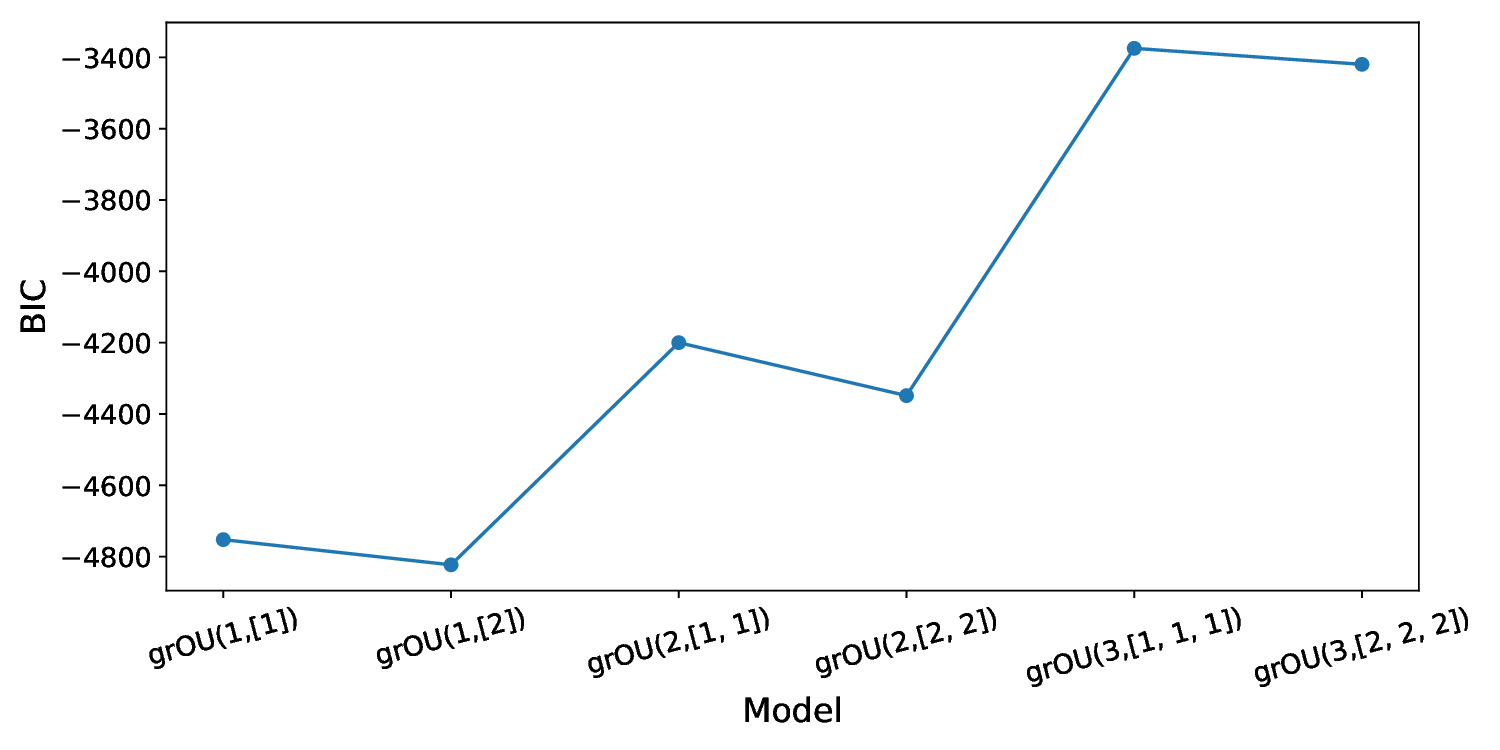}
  \caption{Bayesian Information Criterion values for the set of candidate edge-based grOU models. The plot summarizes model selection results across increasing model complexity for structures based on repeated $1$s and repeated $2$s.}
  \label{bic}
\end{figure}

\subsection{Model without network structure}
\label{Without a network}
Now suppose no network is given, and we wish to jointly select a network and a model to maximize predictive performance. We start with a simple grOU edge specification and select the best-fitting network from a randomly generated candidate set. For example, we initialize with grOU(1,[1]), and, similar to \cite{nason2025forecasting}, we generate 1000 Erd\H{o}s–R\'enyi graphs with an edge probability of $0.4$, which induces a high likelihood of connectivity in the graph. For each candidate network, we compute the directional accuracy for the screening model (grOU(1,[1])). However, our simulation study also indicates that the grOU model remains stable under mild network misspecification, even in the presence of strong autoregressive dependence. Accordingly, this procedure is better viewed as a method for narrowing the set of plausible networks than as a precise diagnostic for recovering the true network.

Our initial idea, later found to have a limitation, was that once a network had been selected, we would proceed as in Subsection \ref{With a network} to determine the optimal model for that network. If the optimal model coincided with the initial choice, we would terminate the selection procedure. Otherwise, we would update the model to the newly selected specification and iterate the process as long as out-of-sample predictive performance continued to improve.

A potential limitation of the iterative graph-then-model selection scheme is that it may converge to a locally optimal graph-model pair rather than the globally best combination. In particular, if a graph $G_1$ performs best for a given model $M_1$, and $M_1$ in turn performs best among candidate models conditional on $G_1$, this does not guarantee that the pair $(G_1,M_1)$ outperforms another pair $(G_2,M_2)$. 

To address this issue more effectively, we jointly select the graph and the model in the final stage. Specifically, we first employ a simple grOU specification, such as grOU$(1,[1])$, to screen a large collection of candidate networks and retain the most promising ones according to out-of-sample predictive performance. We then carry out model selection, as described in Section \ref{With a network}, separately for each retained network. The final specification is thus the pair $(\hat{G},\hat{M})$ that achieves the best predictive performance over the reduced joint search space, rather than one obtained solely through alternating conditional optimization. The overall procedure is summarized as follows:
\begin{enumerate}
    \item Initialize a simple grOU specification, for example, grOU(1,[1]).
    \item Generate a candidate set of networks, for example, 1000 Erd\H{o}s-R\'enyi graphs with an edge probability of 0.4.
    \item Evaluate the initial model on each candidate network and retain the top 50 candidates.
    \item For each retained network, select the best model specification within the grOU edge family using the model selection criterion described in Section \ref{With a network}.
    \item Choose the pair $(\hat G,\hat M)$ that delivers the best predictive performance among all candidate pairs.
\end{enumerate}

The selected network in Figure \ref{selected_network} is relatively sparse but still connects the main cross-asset relationships, suggesting that the forecasting gains in Table \ref{table_pred2} come from retaining the most informative dependencies rather than imposing a dense graph. Under this selected network, the grOU model delivers the strongest directional accuracy while remaining competitive in RMSE. In particular, grOU(1,[2]) slightly outperforms OU and GNAR in directional prediction, as it combines a network structure missing from OU with a continuous-time framework that is better able to capture jump dynamics than GNAR.

\begin{figure}[htbp]
  \centering
  \includegraphics[scale =.4]{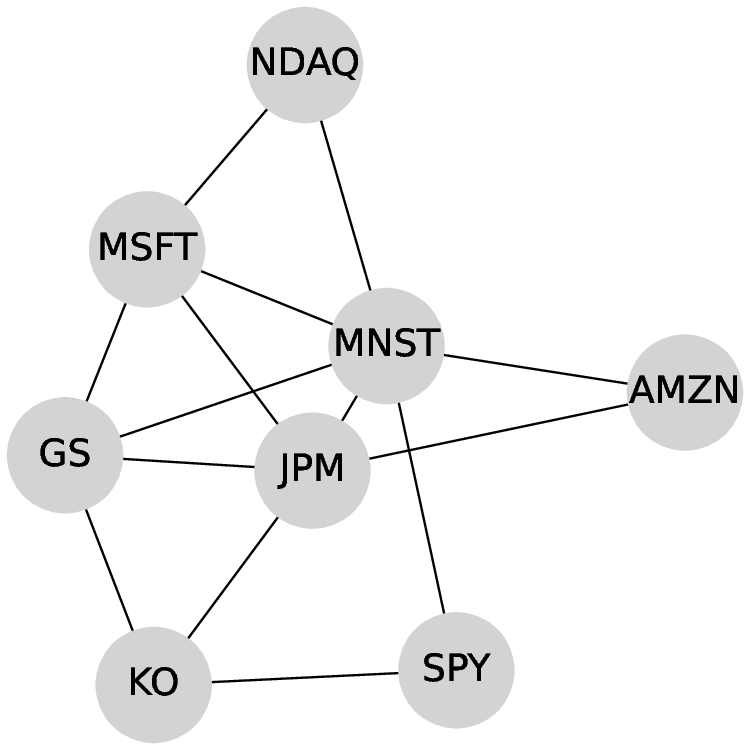}
  \caption{The selected network on eight assets.}
  \label{selected_network}
\end{figure}

\begin{table}[htbp]
\centering
\begin{tabular}{|c|cc|}
\hline
Model & RMSE($10^{-6}$) & DirAcc\\ 
\hline
NA & 9.0310 & 0.5\\
AR & 7.6085 & 0.5997\\
VAR & 12.0966 & 0.5772\\
GNAR & 7.9201 & 0.6878\\
OU & 8.9642 & 0.6923\\
MCAR & 8.9341 & 0.6121\\
grOU(1,[2]) & 8.9568 & 0.6982\\

\hline
\end{tabular}
\caption{The one-step forecast results for competing models under the network in Figure \ref{selected_network}.}
\label{table_pred2}
\end{table}

\section{Conclusion and outlook}
\label{Conclusion and Outlook}
We introduced a L\'evy-driven graph Ornstein-Uhlenbeck (grOU) edge framework for modeling high-dimensional network time series in continuous time. The model inherits the flexibility of Ornstein-Uhlenbeck dynamics while encoding cross-edge dependence through weighted neighborhood operators, thus extending GNAR-edge and MCAR formulations to a tractable continuous-time setting. We reformulated likelihood-based estimators under both continuous and discretized observation schemes and showed their consistency and asymptotic normality under standard conditions. 

Our simulation studies demonstrate the progressive convergence of the discretized maximum likelihood estimator for the grOU edge model as the time horizon increases. Moreover, the new modeling framework achieves higher directional accuracy in the presence of jumps and network dependence, while reducing computational costs relative to multivariate continuous-time alternatives. We also note that the framework can accommodate mild forms of network misspecification with only a modest decrease in predictive accuracy.

An empirical study shows that the grOU model achieves superior directional accuracy at a realistic computational cost for a given network, while BIC-based selection favors parsimonious lag and neighborhood structures. When the underlying network is unobserved, a simple search over candidate graphs, guided by directional accuracy, offers a practical approach to joint network and model selection.

Overall, the grOU edge framework offers a parsimonious, stable and computationally efficient approach for forecasting jump-driven, high-frequency network processes. In future research, it will be interesting to consider data-driven weighting of edge interactions and inference for time-varying network topologies. Moreover, it would be natural to include a stochastic volatility term in the model to allow for additional flexibility and an even greater choice in marginal distributions.

\appendix
\section{Proof of Proposition \ref{prop:secondorder-cond}}
\label{prop_proof}
\begin{proof}
We use the independent increments property of $\textbf{L}_t$ in the following and find that 
\begin{align*}
    \mathbb{E}[\textbf{Y}_{t+h}\vert\mathcal{F}_t] &= \mathbb{E}\left[\textbf{A}(\mathbf{X}_{t+h}-e^{h\mathbb{Q}}\mathbf{X}_t)\vert\mathcal{F}_t\right] + \textbf{A}e^{h\mathbb{Q}}\mathbf{X}_{t}
    =\int_{t}^{t+h}\mathbf{A}e^{(t+h-s)\mathbb{Q}}\mathcal{E}\boldsymbol{\mu}_{\mathbf{L}}ds+\textbf{A}e^{h\mathbb{Q}}\mathbf{X}_{t}\\
    &=
\mathbf{A}\mathbb{Q}^{-1}\left[e^{h\mathbb{Q}}-\textbf{I}\right]\mathcal{E}\boldsymbol{\mu}_{\mathbf{L}} + \textbf{A}e^{h\mathbb{Q}}\mathbf{X}_{t}
=\mathbf{A}[\mathbb{Q}^{-1} e^{h\mathbb{Q}}\mathcal{E}\boldsymbol{\mu}_{\mathbf{L}} +e^{h\mathbb{Q}}\mathbf{X}_{t}]  
-\mathbf{A}\mathbb{Q}^{-1}\mathcal{E}\boldsymbol{\mu}_{\mathbf{L}}\\
&= \E[\mathbf{Y}_0]+\mathbf{A}[\mathbb{Q}^{-1} e^{h\mathbb{Q}}\mathcal{E}\boldsymbol{\mu}_{\mathbf{L}} +e^{h\mathbb{Q}}\mathbf{X}_{t}]=\E[\mathbf{Y}_0]+\mathbf{A}e^{h\mathbb{Q}}[\mathbb{Q}^{-1} \mathcal{E}\boldsymbol{\mu}_{\mathbf{L}} +\mathbf{X}_{t}]\\
&=\E[\mathbf{Y}_0]
+\mathbf{A}e^{h\mathbb{Q}}[\mathbf{X}_{t}-\E[\mathbf{X}_0]], \quad \mathrm{where}\,\, \E[\mathbf{X}_0]=-\mathbb{Q}^{-1} \mathcal{E}\boldsymbol{\mu}_{\mathbf{L}}, \\
    \text{Var}[\textbf{Y}_{t+h}\vert\mathcal{F}_t] &= \text{Var}\left[\textbf{A}(\mathbf{X}_{t+h}-e^{h\mathbb{Q}}\mathbf{X}_t)\vert\mathcal{F}_t\right]=\int_{t}^{t+h}\mathbf{A}e^{(t+h-s)\mathbb{Q}}\mathcal{E}\boldsymbol{\Sigma}_L\mathcal{E}^{\top} e^{(t+h-s)\mathbb{Q}^{\top}}\mathbf{A}^{\top} ds\\
    &=\mathbf{A} \left(\int_{t}^{t+h}e^{(t+h-s)\mathbb{Q}}\mathcal{E}\boldsymbol{\Sigma}_L\mathcal{E}^{\top} e^{(t+h-s)\mathbb{Q}^{\top}}ds\right)\mathbf{A}^{\top}\\    
    &=\mathbf{A} \left(\int_{0}^{h}e^{u\mathbb{Q}}\mathcal{E}\boldsymbol{\Sigma}_L\mathcal{E}^{\top} e^{u\mathbb{Q}^{\top}}du\right)\mathbf{A}^{\top}.
\end{align*}
\end{proof}

\section{Proof of Theorem \ref{thm}}
\label{thm_proof}
\begin{proof}
Under Definition \ref{groudef}, the grOU edge process admits the state-space representation
\begin{equation*}
d\mathbf{X}_t = \mathbb{Q}(\theta)\mathbf{X}_tdt + \mathcal{E}d\mathbf{L}_t,~\mathbf{Y}_t = \mathbf{AX}_t,
\end{equation*}
with $\mathbb{Q}(\theta)\in M_K^-$ and parameter restricted to the graph-structured set $\Theta$. Hence the model is a special case of the multivariate continuous-time autoregressive framework studied in \cite{lucchese2026estimation}. Using the L\'evy–It\^o decomposition of the driving L\'evy process $\{\mathbf{L}_s,s\geq 0\}$, the state equation \eqref{model2} admits the representation
\begin{equation}
\label{model3}
    d\mathbf{X}_t = (\mathbf{\mathbb{Q}}\mathbf{X}_t+\mathcal{E}\mathbf{b})dt + \mathcal{E}\Sigma^{1/2}d\mathbf{W}_t + \int_{\Vert z\Vert\leq 1}\mathcal{E}z\{\mu(dt,dz)-\nu(dz)dt\} + \int_{\Vert z\Vert>1}\mathcal{E}z\mu(dt,dz),
\end{equation}
where $\mu(dt,dz)$ is a Poisson random measure on $\R_+\times(\R^d\setminus\{0\})$ with compensator $\nu(dz)dt$. Given that the compensated small-jump integral converges in $L^2$ and the coefficients in \eqref{model3} satisfy local Lipschitz and linear growth conditions, for any $\hat{\mathbf{Q}}^*=(\hat{\mathbf{Q}}_1,\dots,\hat{\mathbf{Q}}_L)\in(\mathcal{M}_K(\R))^L$, an estimator of the parameter collection $\mathbf{Q}^* = (\mathbf{Q}_1,\dots,\mathbf{Q}_L)$, and any initial condition $\xi\in\R^{LK}$, existence and uniqueness of a weak solution follow from \cite[Proposition II.2.32]{jacod2013limit}. 

We denote the induced probability measure by $\mathbb{P}^{\hat{\mathbf{Q}}^*}_{\xi}$, and let $\mathbb{P}^{\mathbf{Q}^{(0)}}_{\xi}$ be the corresponding reference measure with $\mathbf{Q}^{(0)}=(\mathbf{0}_K,\dots,\mathbf{0}_K)$. Then for fixed $t>0$ and any $\hat{\mathbf{Q}}^*$, \cite[Proposition 3.1]{lucchese2026estimation} implies that $\mathbb{P}^{\hat{\mathbf{Q}}^*}_{\xi}$ is absolutely continuous with respect to $\mathbb{P}^{\mathbf{Q}^{(0)}}_{\xi}$ on $\mathcal{F}_t$, with Radon–Nikodym derivative
\begin{equation}
\label{likelihood}
\E^{\mathbb{P}^{\mathbf{Q}^{(0)}}_{\xi}}\left[\frac{d\mathbb{P}^{\hat{\mathbf{Q}}^*}_{\xi}}{d\mathbb{P}^{\mathbf{Q}^{(0)}}_{\xi}}\biggr\vert\mathcal{F}_t\right]=\exp\left\lbrace-\int_0^t\mathbf{X}_{s-}^{\top}\hat{\mathbb{Q}}^{*{\top}}\tilde{\Sigma}^{-1}d\mathbf{X}_{\mathbf{Q}^{(0)},\xi,s}^{c}-\frac{1}{2}\int_0^t\mathbf{X}_s^{\top}\hat{\mathbb{Q}}^{*{\top}}\tilde{\Sigma}^{-1}\hat{\mathbb{Q}}^*\mathbf{X}_sds\right\rbrace.
\end{equation}
Here, $\mathbf{X}_{\mathbf{Q}^{(0)},\xi,t}^{c}$ denotes the continuous local martingale part
\begin{equation}
\label{martingale}
\mathbf{X}_{\mathbf{Q}^{(0)},\xi,t}^{c}=\mathbf{X}_t-\xi-\mathcal{E}\mathbf{b}t-\mathbb{Q}^{(0)}\int_0^t\mathbf{X}_sds
-\sum_{s\leq t}\Delta \mathbf{X}_s\mathbbm{1}_{\Vert\Delta\mathbf{X}_s\Vert>1}-\int_0^t\int_{\Vert x\Vert\leq 1}x(\mu_\mathbf{X}(dx,dt)-\tilde{\nu}(dx)dt),
\end{equation}
where the finite jump component $\Delta\mathbf{X}_s := \mathbf{X}_s-\mathbf{X}_{s-}$ and $\tilde{\nu}(dx)=\int_{\R^K}\mathbbm{1}_{\mathcal{E}z=dx}\nu(dz)$ is the  L\'evy measure of $\mathbf{X}$. The matrices $\hat{\mathbb{Q}}^*,\mathbb{Q}^{(0)}$ and $\tilde{\Sigma}^{-1}$ take the block forms
\begin{equation*}
    \hat{\mathbb{Q}}^*=\begin{bmatrix}
    \mathbf{0}_{K} & \mathbf{0}_{K} & \mathbf{0}_{K} & \dots & \mathbf{0}_{K}\\
    \mathbf{0}_{K} & \mathbf{0}_{K} & \mathbf{0}_{K}  & \dots & \mathbf{0}_{K}\\
    \vdots & \vdots & \vdots & \ddots & \vdots  \\
    \mathbf{0}_{K} & \mathbf{0}_{K} & \mathbf{0}_{K} & \dots & \mathbf{0}_{K}\\
    -\hat{\mathbf{Q}}_L & -\hat{\mathbf{Q}}_{L-1} & -\hat{\mathbf{Q}}_{L-2} & \dots  & -\hat{\mathbf{Q}}_1
\end{bmatrix}, \mathbb{Q}^{(0)} = \begin{bmatrix}
    \mathbf{0}_{K} & \mathbf{I}_{K} & \mathbf{0}_{K} & \dots & \mathbf{0}_{K}\\
    \mathbf{0}_{K} & \mathbf{0}_{K} & \mathbf{I}_{K}  & \dots & \mathbf{0}_{K}\\
    \vdots & \vdots & \vdots & \ddots & \vdots  \\
    \mathbf{0}_{K} & \mathbf{0}_{K} & \mathbf{0}_{K} & \dots & \mathbf{I}_{K}\\
    \mathbf{0}_{K} & \mathbf{0}_{K} & \mathbf{0}_{K} & \dots & \mathbf{0}_{K}
\end{bmatrix},
\end{equation*}
and
\begin{equation*}
    \tilde{\Sigma}^{-1}:=\mathcal{E}\Sigma^{-1}\mathcal{E}^{\top}=\begin{bmatrix}
    \mathbf{0}_{K}  &\dots & \mathbf{0}_{K} & \mathbf{0}_{K}\\
    \vdots & \ddots& \vdots  & \vdots  \\
    \mathbf{0}_{K} & \dots & \mathbf{0}_{K} & \mathbf{0}_{K}\\
    \mathbf{0}_{K} & \dots & \mathbf{0}_{K} & \Sigma^{-1}
\end{bmatrix}.
\end{equation*}

Then the likelihood function of $\hat{\mathbf{Q}}^*$ given the observation $\mathbf{Y}_{[0,t]}$ admits the quadratic form
\begin{equation}
\label{likelihoodquad}
\mathcal{L}\left(\hat{\mathbf{Q}}^*;\mathbf{Y}_{[0,t]}\right) := \E^{\mathbb{P}^{\mathbf{Q}^{(0)}}_{\xi}}\left[\frac{d\mathbb{P}^{\hat{\mathbf{Q}}^*}_{\xi}}{d\mathbb{P}^{\mathbf{Q}^{(0)}}_{\xi}}\biggr\vert\mathcal{F}_t\right]=\exp\left\lbrace\hat{\theta}^{*{\top}}\mathbf{K}_t-\frac{1}{2}\hat{\theta}^{*{\top}}[\mathbf{K}]_t\hat{\theta}^{*}\right\rbrace,
\end{equation}
where the linear term in \eqref{likelihood} expands as
\begin{equation*}
    \begin{split}
&\int_0^t\mathbf{X}_{s-}^{\top}\hat{\mathbb{Q}}^{*{\top}}\tilde{\Sigma}^{-1}d\mathbf{X}_{\mathbf{Q}^{(0)},\xi,s}^{c}\\
&=-\int_0^t\left[\underbrace{0,\dots,0}_{(L-1)K},\left(\sum_{l=1}^L\sum_{i=1}^K\mathbf{X}_{s-,l}^{(i)}\hat{\mathbf{Q}}^{\top}_{L-l+1}(i,j)\right)_{j=1,\dots,K}\right]^{\top}\tilde{\Sigma}^{-1}d\mathbf{X}_{\mathbf{Q}^{(0)},\xi,s}^{c}\\
    &=-\int_0^t\left[\underbrace{0,\dots,0}_{(L-1)K},\left(\sum_{j=1}^K\sum_{l=1}^L\sum_{i=1}^K\mathbf{X}_{s-,l}^{(i)}\hat{\mathbf{Q}}^{\top}_{L-l+1}(i,j)\Sigma^{-1}(j,k)\right)_{k=1,\dots,K}\right]^{\top}d\mathbf{X}_{\mathbf{Q}^{(0)},\xi,s}^{c}\\
    &=-\sum_{k=1}^K\int_0^t\sum_{j=1}^K\sum_{l=1}^L\sum_{i=1}^K\mathbf{X}_{s-,l}^{(i)}\hat{\mathbf{Q}}^{\top}_{L-l+1}(i,j)\Sigma^{-1}(j,k)dD^{L-1}\mathbf{Y}^{c,(k)}_{\mathbf{Q}^{(0)},s}\\
    &=-\sum_{k=1}^K\int_0^t\sum_{j=1}^K\sum_{l=1}^L\left(\mathbf{X}_{s-,l}^{(j)}\hat{\alpha}_{e_j,L-l+1}\Sigma^{-1}(j,k)\right.\\
    &\qquad\qquad\left.+\sum_{i=1}^K\sum_{r=1}^{R_l}\mathbf{X}_{s-,l}^{(i)}\hat{\beta}_{L-l+1,r}W^{r}(j,i)\Sigma^{-1}(j,k)\right)dD^{L-1}\mathbf{Y}^{c,(k)}_{\mathbf{Q}^{(0)},s}\\
    &=-\int_0^t\hat{\theta}^{*{\top}}\mathbf{H}_s\Sigma^{-1}dD^{L-1}\mathbf{Y}^{c}_{\mathbf{Q}^{(0)},s}=\hat{\theta}^{*{\top}}\mathbf{K}_t
    \end{split}
\end{equation*}
after reordering indices and using the block structure of $\tilde{\Sigma}^{-1}$. The quadratic term follows analogously from the quadratic covariation of the martingale part. How we obtain the maximum likelihood estimator, weak consistency, and asymptotic normality follows from \cite[Theorem 3.1]{lucchese2026estimation}.
\end{proof}

\section{Discretization scheme and sampling assumptions}
\label{discrete}
Assume that $\{\mathbf{Y}_t,t\geq 0 \}$ is observed on a discrete grid. For each $t>0$, let
\begin{equation*}
    \mathcal{P}_t=\{0=s_0<s_1<\cdots<s_{N_t}=t\},
\end{equation*}
with mesh $\Delta_{\mathcal{P}_t}:=\max\limits_{0\le i<N_t}(s_{i+1}-s_i)$ and observations $\mathbf{Y}_{\mathcal{P}_t}:=\{\mathbf{Y}_s:s\in\mathcal{P}_t\}$. For any process $\mathbf{Z}$, write $\Delta_{\mathcal{P}_t}^n\mathbf{Z}:=\mathbf{Z}_{s_{n+1}}-\mathbf{Z}_{s_n}$.

Our discrete estimator uses
\begin{enumerate}
    \item finite-difference approximations of $D^l\mathbf{Y}_{s_n}$, $l=1,\dots,L-1$;  
    \item Riemann-sum approximations of the stochastic integrals;  
    \item thresholding of increments of $D^{L-1}\mathbf{Y}^c_{\mathbf{Q}^{(0)},s_n}$ to recover the continuous part. 
\end{enumerate}
This requires two grids: a fine grid $\mathcal{P}_t$ for derivative approximation and a coarser grid $\mathcal{Q}_t$ for integral approximation.

\subsection{Derivative approximation}\label{Derivative approximation} \cite[Section 4.2]{lucchese2026estimation}
Define forward finite-difference approximations by
\begin{equation*}
    \hat{D}^l\mathbf{Y}_{\mathcal{P}_t}:=\{\hat{D}^l\mathbf{Y}_{s_n};n=0,\dots,N_t-l\},
\end{equation*}
where 
\begin{equation*}
    \hat{D}^l\mathbf{Y}_{s_n}=\frac{\hat{D}^{l-1}\mathbf{Y}_{s_{n+1}}-\hat{D}^{l-1}\mathbf{Y}_{s_n}}{s_{n+1}-s_n}, \quad l\in\{1,\dots,L-1\}
\end{equation*}
with $\hat{D}^0\mathbf{Y}_{\mathcal{P}_t} = \mathbf{Y}_{\mathcal{P}_t}$. To ensure convergence of higher-order finite differences, we impose a uniformity condition on the observation grid.
\begin{assumption}
\label{assumptionB1}
    The sequence of partitions $\{\mathcal{P}_t;t>0\}$ satisfies, as $t\to\infty$, 
    \begin{equation*}
        \vert c^{l-1}_{\mathcal{P}_t}-1\vert = O\left(\Delta^{l-3/2}_{\mathcal{P}_t}\right)\text{ where } c_{\mathcal{P}_t}:=\frac{\inf\limits_{0\leq i <N_t}s_{i+1}-s_i}{\sup\limits_{0\leq i <N_t}s_{i+1}-s_i}.
    \end{equation*}
\end{assumption}

\subsection{Integral approximation} \label{Integral approximation}\cite[Section 4.1]{lucchese2026estimation}
Let
\begin{equation*}
    \mathcal{Q}_t=\{0=u_0<u_1<\dots<u_{N'_t}=t\}
\end{equation*}
be a coarsening of $\mathcal{P}_t$, so that $N'_t\leq N_t$, $\Delta_{\mathcal{P}_t}\leq\Delta_{\mathcal{Q}_t}$, and $\Delta_{\mathcal{Q}_t}\to 0$ as $t\to\infty$. Define the following jump-related processes from \eqref{martingale}:
\begin{align*}
\mathbf{J}_{u_m}:=\sum_{\substack{u\in\mathcal{Q}_t,u\leq u_m}}\Delta D^{L-1}\mathbf{Y}_u\mathbbm{1}_{\{\Vert \Delta D^{L-1}\mathbf{Y}_u\Vert\geq 1\}},\\
\mathbf{M}_{u_m}:= \int_0^{u_m}\int_{\Vert y \Vert\leq 1}y(\mu_\mathbf{Y}(dy,dt)-\tilde{\nu}(dy)dt).
\end{align*}
The integrals in \eqref{K_t} are approximated by
\begin{equation*}
    \sum_{m=0}^{M_t-1}\hat{\mathbf{H}}_{u_m}\Sigma^{-1}\left(\hat{D}^{L-1}\mathbf{Y}_{\mathbf{Q}^{(0)},u_{m+1}}^{c}-\hat{D}^{L-1}\mathbf{Y}_{\mathbf{Q}^{(0)},u_{m}}^{c}\right)
\end{equation*}
and
\begin{equation*}
    \sum_{m=0}^{M_t-1}\hat{\mathbf{H}}_{u_m}\Sigma^{-1}\hat{\mathbf{H}}_{u_m}^{\top}(u_{m+1}-u_m),
\end{equation*}
where $M_t: = \max\{m\in\{0,\dots,N'_t\}:u_m\leq s_{N_t-L}\}$ and 
\begin{equation}
\label{martingaleY}
    \hat{D}^{L-1}\mathbf{Y}_{\mathbf{Q}^{(0)},u_{m}}^{c} = \hat{D}^{L-1}\mathbf{Y}_{u_m} - \hat{D}^{L-1}\mathbf{Y}_{0}-\mathbf{b}u_m-\mathbf{J}_{u_m}-\mathbf{M}_{u_m}.
\end{equation}

In order to preserve the asymptotic properties of the discrete estimator, the coarse grid must satisfy the usual high-frequency conditions.
\begin{assumption}
\label{assumptionB2}
The partitions $\{\mathcal{Q}_t:t\ge 0\}$ satisfy, as $t\to\infty$,
\begin{equation*}
    t\Delta_{\mathcal{Q}_t}\to 0,\quad c_{\mathcal{Q}_t}\to c>0.
\end{equation*}
\end{assumption}
To prevent degeneracy, we also require
\begin{assumption}
\label{P_Q}
    The partitions $\{\mathcal{P}_t;t>0\}$ and $\{\mathcal{Q}_t;t>0\}$ satisfy $\Delta_{\mathcal{P}_t}=o(t^{-1}\Delta^2_{\mathcal{Q}_t})$ as $t\to\infty$.
\end{assumption}

\subsection{Thresholding and jump activity} \label{Thresholding and jump activity}\cite[Section 4.3]{lucchese2026estimation}
Approximate the increments of the process given in Equation \eqref{martingaleY}, i.e.,
\begin{equation*}
    \Delta^m_{\mathcal{Q}_t}\hat{D}^{L-1}\mathbf{Y}_{\mathbf{Q}^{(0)}}^{c} = \Delta^m_{\mathcal{Q}_t}\hat{D}^{L-1}\mathbf{Y} -\mathbf{b}\Delta^m_{\mathcal{Q}_t}-\Delta^m_{\mathcal{Q}_t}\mathbf{J}-\Delta^m_{\mathcal{Q}_t}\mathbf{M}
\end{equation*}
by the thresholded increments
\begin{equation*}
\Delta^m_{\mathcal{Q}_t}\hat{D}^{L-1}\mathbf{Y}^c_{\mathbf{Q}^{(0)}} :=\left[\Delta^m_{\mathcal{Q}_t}\hat{D}^{L-1}\mathbf{Y}-\mathbf{b}\Delta^m_{\mathcal{Q}_t}\right]\odot\mathbbm{1}_{\left\lbrace \vert \Delta^m_{\mathcal{Q}_t}\hat{D}^{L-1}\mathbf{Y}-\mathbf{b}\Delta^m_{\mathcal{Q}_t}\vert\leq\bm{\nu}^m_t\right\rbrace}.
\end{equation*}

\begin{assumption}
\label{assumptionB4}
Assume the following according to the jump activity of the driving L\'evy process.
\begin{enumerate}
    \item[(a)] \textbf{Finite jump activity}. \cite[4.3.1]{lucchese2026estimation}  
    Equation \eqref{martingaleY} becomes
    \begin{equation*}
\Delta^m_{\mathcal{Q}_t}\hat{D}^{L-1}\mathbf{Y}^c_{\mathbf{Q}^{(0)}}=\Delta^m_{\mathcal{Q}_t}\hat{D}^{L-1}\mathbf{Y}-\mathbf{b}\Delta^m_{\mathcal{Q}_t}-\Delta^m_{\mathcal{Q}_t}\mathbf{J}.
    \end{equation*}
    Let $\lambda^{(i)}$ denote the jump intensity of the $i$-th component and define
\begin{equation*}
\bar{\nu}^{(i)} = \frac{\nu^{(i)}}{\lambda^{(i)}}.
\end{equation*}
Here we impose the condition on the sequence of partitions $\mathcal{Q}_t$ and the thresholding sequences $\bm{\nu}_t^m$ to ensure the asymptotic almost surely for the event that we capture the jump activity if the increment exceeds the threshold for $i\in\{1,\dots,K\}$.
\begin{enumerate}
\item[(i)] $\bm{\nu}_t^{m,(i)}=\left(\Delta^m_{\mathcal{Q}_t}\right)^{\beta^{(i)}}$ with $\beta^{(i)}\in(0,1/2);$
\item[(ii)] $t\Delta_{\mathcal{Q}_t}^{1-2\beta^{(i)}}\to 0, t\to\infty;$
\item[(iii)] $t\bar{\nu}^{(i)}\left(\left(-2\Delta_{\mathcal{Q}_t}^{\beta^{(i)}},2\Delta_{\mathcal{Q}_t}^{\beta^{(i)}}\right)\right)\to 0, t\to\infty.$
\end{enumerate}

    \item[(b)] \textbf{Infinite jump activity}. \cite[4.3.2]{lucchese2026estimation}  
    Equation \eqref{martingaleY} becomes
    \begin{equation*}
        \Delta_{\mathcal{Q}_t}^m \hat{D}^{L-1}\mathbf{Y}_{\mathbf{Q}^{(0)}}^{c}
        =
        \Delta_{\mathcal{Q}_t}^m \hat{D}^{L-1}\mathbf{Y}
        -\mathbf{b}\Delta_{\mathcal{Q}_t}^m
        -\Delta_{\mathcal{Q}_t}^m\mathbf{J}
        -\Delta_{\mathcal{Q}_t}^m\mathbf{M}.
    \end{equation*}
    Define the normalized L\'evy measure $\bar{\nu}:=\nu\vert_{\{\mathbf{x}:\Vert \mathbf{x}\Vert>1\}}/\nu(\{\mathbf{x}:\Vert \mathbf{x}\Vert>1\})$. Then we have the following assumptions on the sequence of partitions $\mathcal{Q}_t$ and the thresholding sequence $\bm{\nu}_t^m$ for $i\in\{1,\dots,K\}$.
    \begin{enumerate}
        \item[(i)] $\bm{\nu}_t^{m,(i)}=\left(\Delta^m_{\mathcal{Q}_t}\right)^{\beta^{(i)}}$ with $\beta^{(i)}\in(0,1/4);$
        \item[(ii)] $t\Delta_{\mathcal{Q}_t}^{1-4\beta^{(i)}}\to 0, t\to\infty;$
        \item[(iii)] $t\bar{\nu}^{(i)}\left(\left(-4\Delta_{\mathcal{Q}_t}^{\beta^{(i)}},4\Delta_{\mathcal{Q}_t}^{\beta^{(i)}}\right)\right)\to 0, t\to\infty;$
        \item[(iv)] $\exists\varepsilon^{(i)}>0$ such that $t\Delta_{\mathcal{Q}_t}^{2\varepsilon^{(i)}}\to 0, t\to\infty$ and 
        \begin{equation*}
            \E_{\mathbf{Q}^*}\left[\vert\mathbf{M}_s^{(i)}\vert1_{\left\lbrace\vert\mathbf{M}_s^{(i)}\vert\leq s^{\beta^{(i)}}\right\rbrace}\right]=O\left(s^{1+\varepsilon^{(i)}}\right) \text{ as } s\downarrow 0;
        \end{equation*}
        \item[(v)] The driving L\'evy process $\mathbf{L}$ has finite fourth-moment.
    \end{enumerate}
\end{enumerate}
\end{assumption}

\section{Proof of Theorem \ref{thm2}}
\label{thm2_proof}
\begin{proof}
    The result is a discretized counterpart of Theorem \ref{thm}. The discretized quantities $\mathbf{K}_{\mathcal{P}_t,\mathcal{Q}_t,\bm{\nu}_t}$ and $[\mathbf{K}]_{\mathcal{P}_t,\mathcal{Q}_t}$ are obtained from $\mathbf{K}_t$ and $[\mathbf{K}]_t$ in Theorem \ref{thm} via Appendix \ref{Derivative approximation}-\ref{Thresholding and jump activity}. Under Assumptions \ref{assumptionB1}-\ref{assumptionB4}, the approximation results of \cite[Section 4 and Appendix D]{lucchese2026estimation} imply that these discretized quantities are asymptotically equivalent to their continuous-time counterparts. Therefore, the discretized estimator $\theta_D^*$ is asymptotically equivalent to the continuous-time estimator $\theta_{MLE}^*$ and complete the proof.
\end{proof}

\section{Simulation data table}
\label{Simulation data table}
Across all three noise levels and network specification scenarios in Tables \ref{sim_table_1}-\ref{sim_table_10}, the predictive accuracy of the competing models is broadly comparable in terms of RMSE, implying that all models are effectively indistinguishable from the naive model according to this metric. Network-based methods such as GNAR and grOU typically perform among the best in terms of directional accuracy, suggesting that explicitly incorporating network structure can improve sign prediction. By contrast, VAR and MCAR tend to be less competitive despite their richer multivariate structure.

\begin{table}[htbp]
\centering
\begin{tabular}{|c|c|c|c|}
\hline
\multicolumn{4}{|c|}{Correct specification}
\\
\hline
Model & RMSE & Directional accuracy & Time(s) \\ \hline
NA   & 0.0498(0.0165) & 0.5 & -\\
AR   & 0.0498(0.0165) & 0.5057(0.0081) & 0.3283(0.0668)\\
VAR  & 0.0501(0.0165) & 0.5020(0.0078) & 1.3022(0.1562)\\
GNAR & 0.0498(0.0165) & 0.5096(0.0084) & 1.1440(0.1952)\\
OU   & 0.0498(0.0165) & 0.5074(0.0082) & 3.8165(0.5432)\\
MCAR & 0.0505(0.0166) & 0.5041(0.0083) & 4.0975(0.4534)\\
grOU & 0.0499(0.0165) & 0.5105(0.0082) & 2.9951(0.3491)\\ \hline

\multicolumn{4}{|c|}{Missing $\alpha = 5$ edge}
\\
\hline
NA   & 0.0498(0.0166) & 0.5 & -\\
AR   & 0.0498(0.0166) & 0.5040(0.0084) & 0.2751(0.0459)\\
VAR  & 0.0501(0.0166) & 0.5013(0.0085) & 1.2135(0.0996)\\
GNAR & 0.0498(0.0167) & 0.5079(0.0086) & 0.8227(0.1246)\\
OU   & 0.0498(0.0167) & 0.5059(0.0087) & 3.1876(0.3717)\\
MCAR & 0.0504(0.0168) & 0.5037(0.0086) & 3.5216(0.3454)\\
grOU & 0.0498(0.0166) & 0.5086(0.0087) & 2.6845(0.2816)\\ \hline

\multicolumn{4}{|c|}{Missing $\alpha = 1$ edge}
\\
\hline
NA   & 0.0499(0.0166) & 0.5 & -\\
AR   & 0.0499(0.0166) & 0.5059(0.0085) & 0.2579(0.0480)\\
VAR  & 0.0502(0.0166) & 0.5020(0.0085) & 1.1470(0.0971)\\
GNAR & 0.0499(0.0166) & 0.5095(0.0086) & 0.7672(0.1198)\\
OU   & 0.0499(0.0166) & 0.5076(0.0085) & 2.9820(0.3271)\\
MCAR & 0.0504(0.0167) & 0.5044(0.0086) & 3.3673(0.3204)\\
grOU & 0.0500(0.0166) & 0.5105(0.0083) & 2.5544(0.2514)\\ \hline
\end{tabular}
\caption{The statistical and computational performance for competing models in 3 scenarios when $\sigma^2=1$.}
\label{sim_table_1}
\end{table}

\begin{table}[htbp]
\centering
\begin{tabular}{|c|c|c|c|}
\hline
\multicolumn{4}{|c|}{Correct specification}
\\
\hline
Model & RMSE & Directional accuracy & Time(s) \\ \hline
NA   & 0.0766(0.0492) & 0.5 & -\\
AR   & 0.0767(0.0491) & 0.5080(0.0108) & 0.2769(0.0427)\\
VAR  & 0.0775(0.0490) & 0.5028(0.0099) & 1.1486(0.0816)\\
GNAR & 0.0766(0.0492) & 0.5152(0.0110) & 0.9950(0.1112)\\
OU   & 0.0766(0.0491) & 0.5129(0.0106) & 3.6324(0.4375)\\
MCAR & 0.0775(0.0495) & 0.5069(0.0099) & 4.2085(0.4883)\\
grOU & 0.0767(0.0491) & 0.5170(0.0114) & 3.1572(0.4242)\\ \hline

\multicolumn{4}{|c|}{Missing $\alpha = 5$ edge}
\\
\hline
NA   & 0.0765(0.0494) & 0.5 & -\\
AR   & 0.0766(0.0494) & 0.5055(0.0108) & 0.2594(0.0406)\\
VAR  & 0.0773(0.0493) & 0.5015(0.0104) & 1.1656(0.0807)\\
GNAR & 0.0765(0.0494) & 0.5131(0.0108) & 0.7945(0.1059)\\
OU   & 0.0765(0.0494) & 0.5117(0.0105) & 3.3502(0.4454)\\
MCAR & 0.0773(0.0497) & 0.5065(0.0102) & 3.7562(0.4647)\\
grOU & 0.0765(0.0494) & 0.5152(0.0112) & 2.9151(0.3977)\\ \hline

\multicolumn{4}{|c|}{Missing $\alpha = 1$ edge}
\\
\hline
NA   & 0.0769(0.0494) & 0.5 & -\\
AR   & 0.0770(0.0494) & 0.5082(0.0112) & 0.2606(0.0439)\\
VAR  & 0.0777(0.0493) & 0.5030(0.0109) & 1.1702(0.0926)\\
GNAR & 0.0769(0.0494) & 0.5147(0.0112) & 0.7799(0.0976)\\
OU   & 0.0769(0.0494) & 0.5132(0.0108) & 3.3827(0.4361)\\
MCAR & 0.0777(0.0497) & 0.5076(0.0109) & 3.8258(0.4687)\\
grOU & 0.0770(0.0494) & 0.5170(0.0115) & 2.9511(0.3869)\\ \hline
\end{tabular}
\caption{The statistical and computational performance for competing models in 3 scenarios when $\sigma^2=5$.}
\label{sim_table_5}
\end{table}

\begin{table}[htbp]
\centering
\begin{tabular}{|c|c|c|c|}
\hline
\multicolumn{4}{|c|}{Correct specification}
\\
\hline
Model & RMSE & Directional accuracy & Time(s) \\ \hline
NA   & 0.0980(0.0749) & 0.5 & -\\
AR   & 0.0981(0.0748) & 0.5096(0.0139) & 0.2880(0.0482)\\
VAR  & 0.0995(0.0745) & 0.5034(0.0123) & 1.1806(0.0877)\\
GNAR & 0.0980(0.0749) & 0.5200(0.0137) & 1.0211(0.1335)\\
OU   & 0.0980(0.0749) & 0.5173(0.0132) & 3.8465(0.4972)\\
MCAR & 0.0992(0.0754) & 0.5098(0.0125) & 4.4385(0.5550)\\
grOU & 0.0980(0.0748) & 0.5221(0.0148) & 3.3500(0.4809)\\ \hline

\multicolumn{4}{|c|}{Missing $\alpha = 5$ edge}
\\
\hline
NA   & 0.0978(0.0752) & 0.5 & -\\
AR   & 0.0980(0.0752) & 0.5066(0.0135) & 0.2554(0.0405)\\
VAR  & 0.0992(0.0749) & 0.5018(0.0123) & 1.1660(0.0780)\\
GNAR & 0.0978(0.0752) & 0.5173(0.0133) & 0.7733(0.0901)\\
OU   & 0.0979(0.0752) & 0.5159(0.0130) & 3.4150(0.4567)\\
MCAR & 0.0988(0.0757) & 0.5097(0.0123) & 3.8597(0.4914)\\
grOU & 0.0978(0.0752) & 0.5207(0.0145) & 3.0209(0.4295)\\ \hline

\multicolumn{4}{|c|}{Missing $\alpha = 1$ edge}
\\
\hline
NA   & 0.0984(0.0752) & 0.5 & -\\
AR   & 0.0985(0.0752) & 0.5099(0.0143) & 0.2610(0.0447)\\
VAR  & 0.0997(0.0749) & 0.5040(0.0133) & 1.1682(0.0942)\\
GNAR & 0.0984(0.0752) & 0.5191(0.0139) & 0.7872(0.1057)\\
OU   & 0.0984(0.0752) & 0.5175(0.0132) & 3.4531(0.4704)\\
MCAR & 0.0993(0.0756) & 0.5107(0.0127) & 3.9494(0.5126)\\
grOU & 0.0985(0.0751) & 0.5219(0.0146) & 3.0789(0.4505)\\ \hline
\end{tabular}
\caption{The statistical and computational performance for competing models in 3 scenarios when $\sigma^2=10$.}
\label{sim_table_10}
\end{table}

\section{The MRC estimator}
\label{mrc}
When working with high-frequency financial data, the standard realized covariance estimator becomes unreliable because the observed transaction prices are contaminated by market microstructure noise. This noise arises from several trading frictions, such as bid–ask bounce, price discreteness, and order processing effects, which cause the recorded price to deviate from the latent efficient price. As a result, when realized covariance is computed using very high-frequency observations, the estimator tends to accumulate noise-induced variation rather than the true quadratic covariation of the underlying price processes.

The Modulated Realized Covariance (MRC) estimator addresses these issues by applying a pre-averaging procedure to the observed price series before computing the covariance matrix. The idea is to smooth short blocks of high-frequency observations so that the noise components, which fluctuate rapidly, tend to cancel out, while the slower-moving efficient price component is preserved. After this filtering step, the covariance of the pre-averaged increments provides a consistent estimator of the integrated covariance of the latent efficient price processes. Consequently, the MRC estimator substantially reduces the impact of microstructure noise while retaining the information contained in high-frequency data, making it a more robust choice than the standard realized covariance estimator.

The code function takes as input a matrix of asset prices $Y$ with $n$ observations and $d$ assets, together with two tuning parameters $\delta$ and $\theta$ that determine the size of the pre-averaging window. Let $Y_i$ denote the $d$-dimensional vector of observed prices at time index $i$. The algorithm first computes a window length
\[
N = \left\lceil (n-1)^{\delta}\theta \right\rceil .
\]
To ensure symmetry in the pre-averaging step, the window size $k_n$ is set equal to $N$ if $N$ is even and $N+1$ otherwise. The window is then divided into two equal halves of length $k_n/2$. Since each pre-averaged increment requires a complete window, the number of valid indices becomes
\[
m = n - k_n + 1 .
\]

For each index $i = 0, \dots, m-1$, the algorithm constructs a pre-averaged return vector $\bar{Y}_i$. This is computed as the difference between the sum of the forward half-window and the sum of the backward half-window, scaled by the window length:
\[
\bar{Y}_i =
\frac{1}{k_n}
\left(
\sum_{j=i+k_n/2}^{i+k_n-1} Y_j
-
\sum_{j=i}^{i+k_n/2-1} Y_j
\right).
\]
This operation produces a smoothed increment that reduces the influence of market microstructure noise while retaining the underlying price variation.

After computing all pre-averaged vectors, the estimator forms the covariance matrix by summing the outer products $\bar{Y}_i \bar{Y}_i^\top$ across all valid indices. The result is multiplied by the scaling factor
\[
\frac{n-1}{n-k_n+1} \cdot \frac{12}{k_n},
\]
which corrects for the window length and the reduced number of observations due to pre-averaging. This yields the modulated realized covariance matrix.

If the option \texttt{is\_corr=True} is selected, the covariance matrix is converted into a correlation matrix by dividing each entry by the product of the corresponding standard deviations. Finally, the algorithm extracts the upper-triangular elements of the resulting matrix and reports them as labeled covariance or correlation estimates for each pair of assets.

\subsection*{Acknowledgement}
This work has been supported through the 
EPSRC NeST Programme grant EP/X002195/1.

\bibliographystyle{agsm}
\bibliography{GraphOUedge}

\end{document}